\newcommand{\C}{{\mathbb C}}
\newcommand{\Pp}{{\mathbb P}}
\newcommand{\n}{\parallel}
\numberwithin{equation}{section}
\theoremstyle{plain}
\newtheorem{theorem}[equation]{Theorem}
\newtheorem{lemma}[equation]{Lemma}
\newtheorem{corollary}[equation]{Corollary}
\newtheorem*{definition*}{Definition}
\newtheorem{remark1}[equation]{Remark}
\theoremstyle{definition}
\newtheorem{Claim}[equation]{Claim}
\newtheorem{definition}[equation]{Definition}
\begin{document}

\title[Spectral rigidity for twisted $S_\nu U(2)$]{Spectral rigidity for infinitesimal generators of representations of twisted $S_\nu U(2)$. }
\author{Michael I Stessin}
\address{Department of Mathematics and Statistics \\
University at Albany \\
Albany, NY 12222}
\email{mstessin@albany.edu}

\begin{abstract}
We prove spectral rigidity theorems for infinitesimal generators of representations of twisted $S_\nu U(2)$ and for representations of $\mathfrak s \mathfrak l (2)$. 
\end{abstract}
\keywords{projective joint spectrum, determinantal manifold, representation, matrix pseudogroup}
\subjclass[2010]
{Primary:  
47A25, 47A13, 47A75, 47A15, 14J70.	Secondary: 47A56, 47A67}
\maketitle

\section{\textbf{Introduction and statement of results}}

\vspace{.2cm}

Given a tuple of $N\times N$ matrices $A_1,\dots,A_n$ the \textit{determinantal manifold (determinantal hypersurface)} $\sigma(A_1,\dots,A_n)$ is defined by
\begin{eqnarray}&\sigma(A_1,\dots,A_n) \nonumber \\
& \label{determinantal} \\
&=\Big \{[x_1:\cdots :x_n]\in \C	\Pp^{n-1}: \ det\big(x_1A_1+\cdots +x_nA_n)=0\big) \Big\}. \nonumber
\end{eqnarray}

In \cite{Y} R. Yang introduced an analog of ``determinantal manifold" for an arbitrary tuple of bounded operators acting on a Hilbert space $H$ and called it \textit{projective joint spectrum} of the tuple. It is defined as follows:
\begin{definition} 
Let $A_1,...,A_n$ be operators acting on a Hilbert space $H$. The \textit{projective joint spectrum} of the tuple $A_1,...,A_n$  is 
\begin{eqnarray}
&\sigma(A_1,...,A_n) \nonumber \\
& \label{projective} \\
&=\{ [x_1,...,x_n]\in \C{\mathbb P}^{n-1}: x_1A_1+\cdots +x_nA_n \ \mbox{is not invertible} \}. \nonumber 
\end{eqnarray}
\end{definition}

If the dimension of $H$ is finite, \eqref{determinantal} and \eqref{projective} coincide, so in the finite dimensionaln case, and this is the case we concentrate on in the present paper,  we use the terminology ``determinantal manifold" and ``projective joint spectrum" interchangiably. 

Determinantal manifolds of a matrix pencils have been under scrutiny for more than a hundred years. Notably, the study of group determinants led Frobenius to laying out the foundation of representation theory. There is an extensive literature on the question when an algebraic manifold of codimension 1 in the projective space admits a determinantal representation. Without trying to give an exhaustive account of the references on this topic, we just mention \cite{D1}-\cite{D5}, \cite{KV}, \cite{V}, and also the monograph \cite{D} and references there.

To avoid trivial redundancies, it is common to assume that at least one of the operators in the tuple (usually the last one) is invertible, and, therefore, 
can be taken to be the identity (or rather -$I$). In what follows, we will always assume that this is the case and write $\sigma(A_1,...,A_{n-1})$ instead of $\sigma(A_1,...,A_{n-1},-I)$. Also, it was found to be useful to deal with the part of the  projective joint spectrum that lies in the chart $\{x_n\neq 0\}$. This part is called the \textit{proper projective joint spectrum} 
and is denoted by $\sigma_p(A_1,...,A_{n-1})$,
\begin{eqnarray*}
&\sigma_p(A_1,...,A_n)  \\
  &=\{ (x_1,...,x_n)\in \C^n:  x_1A_1+\cdots+x_nA_n-I \ \mbox{is not invertible} \} .   
\end{eqnarray*}

\vspace{.3cm}

\vspace{.2cm}

In the last decade projective joint spectra of operator and matrix tuples have been intensively investigated 
(see \cite{BCY}-\cite{CST},\cite{DY}-\cite{KV}, \cite{MQW}, \cite{S}-\cite{SYZ},\cite{Y}) from the following angle: what does the geometry of the projective joint spectrum tell us about the relations between operators in the tuple? In particular, \cite{GY}, \cite{CST}, \cite{PS1} established spectral characterization of representations of infinite dihedral group, non-special finite Coxeter groups, and some subgroups of the permutation group related to the Hadamard matrices of Fourier type. In \cite{GLX} a spectral aspect of representations of simple Lie algebras and certain representations of ${\mathfrak s}{\mathfrak l}(2)$ were investigated. 

Somewhat more general type of results, which we call \textit{spectral rigidity results} are statements of the inverse type: ``\textit{if counting multiplicities the projective joint spectra of two operator or matrix tuples are the same, then (perhaps under some mild conditions) the tuples are equivalent.}" These type of results were established for representations of Coxeter groups, \cite{CST}, \cite{S}, and for subgroups the permutation group associated with Hadamard matrices, \cite{PS1}. The goal of this paper is to start the investigation of spectral rigidity for representation of Lie algebras, or related objects.


To see the difference between spectral characterization of representations and spectral rigidity, let us look at ${\mathfrak s}{\mathfrak l}(2)$.
The Lie algebra ${\mathfrak s}{\mathfrak l}(2)$ is generated by 
\begin{equation}\label{generators_sl2}
e=\left [ \begin{array}{cc} 0 & 1\\ 0 & 0\end{array} \right ],  \  f=\left [ \begin{array}{cc} 0&0 \\ 1&0 \end{array} \right ], \  \ h=\left [ \begin{array}{cc} 1 & 0\\ 0& -1 \end{array} \right ]. \end{equation}
These generators satisfy the commutation relations
\begin{equation}\label{commutation} 
[h,e]=2e, \ [h,f]=-2f, \ [e,f]=h.
\end{equation}

It is well-known that for each $n\geq 2$ there is a unique up to an equivalence $n $-dimensional irreducible representation of $\mathfrak{sl}(2)$, generated by the  $n\times n$ matrices $E_n,F_n,H_n$ which as operators on $\C^n$ act in the standard basis $\zeta_0,...,\zeta_{n-1}$ in the following way:
\begin{eqnarray}
F_n\zeta_j=\zeta_{j+1}, \  j=0,\dots,n-2, \ F_n\zeta_{n-1}=0, \nonumber \\
E_n\zeta_0=0, \ E_n\zeta_j=j(n-j)\zeta_{j-1}, \ j=1,\dots, n-1, \label{representation_sl2}\\
H_n\zeta_j=(n-1-2j)\zeta_j, \ j-0,\dots, n-1. \nonumber
\end{eqnarray}
This obviously implies that the joint spectrum of the images of the generators of ${\mathfrak s}{\mathfrak l}(2)$ under a finite-dimensional representation determines the representation.

An explicit expression of the projective joint spectrum $\sigma(H_n,E_n.F_n)$ can be found in \cite{GLX}. In particular, for $n=3$ 
$$E_3=\left [ \begin{array}{rrr} 0 & 2 & 0 \\ 0 & 0 & 2 \\ 0 & 0 & 0 \end{array}\right ], \ F_3=\left [ \begin{array}{rrr} 0&0&0 \\ 1& 0 & 0\\ 0 & 1&0 \end{array} \right ], \ H_3=\left [ \begin{array}{rrr} 2& 0 & 0\\ 0 &0 &0\\0 & 0  &-2\end{array}  \right ], $$
and

$$\sigma(H_3,E_3,F_3)=\Big\{[x:y:z:t]\in \C\Pp^3: \ t(4x^2+4yz-t^2)=0\Big\}. $$ 
Now, let \large $\alpha, \beta, \gamma, \delta  \in\C$\normalsize \ \  satisfy  \ \ \large $\alpha \gamma = \beta \delta =2$,\normalsize
$$A_1=H_3, \ A_2=\left [ \begin{array}{rrr}0& \alpha & 0 \\0&0&0\\ 0 &\beta & 0 \end{array} \right ],  \ A_3=\left [\begin{array}{rrr} 0&0&0 \\ \gamma &0&\delta \\ 0 & 0 &0\end{array} \right ], \ . $$
Then $$\sigma(A_1,A_2,A_3)=\sigma(H_3,E_3,F_3),$$
but $$ [A_2,A_3]=\left [ \begin{array}{rrr} 2 & 0 & \alpha \delta \\ 0 & -4 &  0\\ \beta \gamma &0 & 2 \end{array}  \right ] \neq A_1,$$

that is the joint spectrum of the tuple does not determine commutation relations, and, therefore, is not enough for spectral rigidity. Of course, the main reason for the lack of rigidity is the generators' nilpotance. 

To remedy the situation we look at the twisted $S_\nu U(2)$ matrix pseudogroup introduced by Woronovicz \cite{W}. As $\nu \to 1$, infinitesimal generators of a finite dimensional representation of $S_\nu U(2)$ approach generators of a representation of ${\mathfrak s}{\mathfrak l}(2)$. As shown in \cite {W}, the infinitesimal generators determine a representation of $S_\nu U(2)$ up to an equivalence, and a Theorem of Woronowicz (Theorem \ref{woronowicz_generators} below) states that similar to the situation with ${\mathfrak s}{\mathfrak l}(2)$, for each $n$ there is only 1 up to an equivalence representation of $S_\nu U(2)$. The definition of $S_\nu U(2)$ via a $C^*$-algebra approach suggests including in the tuple not only images of generators, but also their adjoints, and this leads to spectral rigidity results for infinitesimal generators of $S_\nu U(2)$ obtained in this paper. The limit case $\nu=1$ turns into spectral rigidity for ${\mathfrak s}{\mathfrak l}(2)$. The basic definitions of $S_\nu U(2)$ and infinitesimal generators of it's representations are recalled in the next section. For $\nu \in  [-1,1]\setminus\{0\}$ we denote the infinitesimal generators of the (unique up to an equivalence) self-adjoint $n$-dimensional representation of $S_\nu U(2)$ by $E_{n,\nu},F_{n,\nu}$ and $H_{n,\nu}$. As operators on $\C^n$ they act in the standard orthonormal basis $e_0,...,e_{n-1}$ according to \eqref{generators_explicit} in Theorem \ref{woronowicz_generators}, namely

\begin{eqnarray}
&F_{n,\nu}e_k=-c_{k+1}(\nu)e_{k+1}, \ H_{n,\nu}e_k=\frac{\nu^2}{1-\nu^2}(\nu^{2(n-2k-1)}-1)e_k, \nonumber \\ \label{generators}\\ &E_{n,\nu}e_k=\nu c_k(\nu)e_{k-1}, \nonumber\end{eqnarray} 
where
\begin{equation}\label{c}
c_k(\nu)=\frac{\nu}{1-\nu^2}\Big[\big(\nu^{n-2k-1}-\nu^{n-1}\big)\big(\nu^{1-n}-\nu^{n-2k+1}\big)\Big]^{1/2}. \end{equation}
Remark that the difference in exponents for $\nu$ between the ones in \eqref{generators}, \eqref{c} and the corresponding expressions in Theorem \ref{woronowicz_generators} is caused by different indexing: in \eqref{generators} and \eqref{c} the dimension of the representation is $n$ and $n$ is an integer, while  in Theorem \ref{woronowicz_generators} it is $2n+1$ with $n$ being a half-integer. Also in \eqref{generators} and \eqref{c} indexes for basic vectors vary from 0 to $n-1$, and in Theorem \ref{woronowicz_generators} - from $-n$ to $n$.

\vspace{.3cm}

Our main results are the following.

\begin{theorem}\label{rigidity_for_S_nu1}

Let $\nu \in [-1,1]\setminus\{0\}$. Suppose that 
 $A_1,A_2,A_3$ are $n$-dimensional matrices such that
 
 \begin{itemize} 
 \item[(a)] $A_1$ is normal. 
\item[(b)] The following equalities of pairwise projective joint spectra take place:
\begin{eqnarray*}
&\sigma_p(A_1,A_2A_2^\ast)=\sigma_p(H_{n,\nu},E_{n,\nu}E_{n,\nu}^\ast);  \
\sigma_p(A_1,A_2^\ast A_2)=\sigma_p(H_{n,\nu},E_{n,\nu}^\ast E_{n,\nu}); \\ 
&\sigma_p(A_1,A_3A_3^\ast)=\sigma_p(H_{n,\nu},F_{n,\nu}F_{n,\nu}^\ast); \ \sigma_p(A_1,A_3^\ast A_3)=\sigma_p(H_{n,\nu},F_{n,\nu}^\ast F_{n,\nu});\\
&\sigma_p(A_1,A_2A_3)=\sigma_p(H_{n,\nu},E_{n,\nu}F_{n,\nu}).
\end{eqnarray*}
\end{itemize}
 Then $\Big(A_1,A_2,A_3\Big)$ is unitary equivalent to $\Big(H_{n,\nu},E_{n,\nu},F_{n,\nu}\Big)$.
	\end{theorem}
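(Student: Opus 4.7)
The plan is to reduce to $A_1 = H_{n,\nu}$ by unitary conjugation; use the diagonal structure of $A_1$ together with the five spectral identities to force each of the products $A_2A_2^*, A_2^*A_2, A_3A_3^*, A_3^*A_3, A_2A_3$ to be diagonal and equal to the analogous product built from $E_{n,\nu}, F_{n,\nu}$; pin down the sparsity pattern of $A_2, A_3$; and remove residual phases by a diagonal unitary. Setting $y = 0$ in any of the spectral identities gives $\det(xA_1 - I) = \det(xH_{n,\nu} - I)$, so $A_1$ and $H_{n,\nu}$ are cospectral; since the eigenvalues $\lambda_k = \tfrac{\nu^2}{1-\nu^2}(\nu^{2(n-2k-1)}-1)$ of $H_{n,\nu}$ are all distinct for every $\nu \in [-1,1]\setminus\{0\}$ (interpreted by continuity at $\nu = \pm 1$), normality of $A_1$ lets me conjugate by a suitable unitary to arrange $A_1 = H_{n,\nu} = \mathrm{diag}(\lambda_0, \dots, \lambda_{n-1})$ without disturbing the other hypotheses.

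With $A_1$ now diagonal, the expansion
\begin{equation*}
\det(xH_{n,\nu} + yB - I) = \sum_{S \subseteq \{0, \dots, n-1\}} y^{|S|}\,\Big(\prod_{i \notin S}(x\lambda_i - 1)\Big)\det(B[S, S])
\end{equation*}
holds for any matrix $B$. Matching the coefficient of $y^1$ in each spectral identity against the analogous expression for its diagonal model, and invoking linear independence of the Lagrange-type polynomials $\{\prod_{j \neq i}(x\lambda_j - 1)\}_i$, forces the diagonal of each product $A_2A_2^*, A_2^*A_2, A_3A_3^*, A_3^*A_3, A_2A_3$ to match that of the corresponding diagonal product built from $E_{n,\nu}, F_{n,\nu}$. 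The analogous comparison at the coefficient of $y^{n-1}$ forces every $(n-1) \times (n-1)$ principal minor of each product to equal the product of the matching diagonal entries; for the four positive semidefinite products this is the equality case of Hadamard's inequality $\det(B[S,S]) \le \prod_{i \in S} B_{ii}$, which forces each such principal submatrix, and hence each of $A_2A_2^*, A_2^*A_2, A_3A_3^*, A_3^*A_3$ itself, to be diagonal.

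The vanishing diagonal entries of these four products kill the last row and first column of $A_2$ and the first row and last column of $A_3$; orthogonality of the remaining rows and columns with their prescribed squared norms then forces $A_2$ and $A_3$ each to carry exactly one nonzero entry per remaining row and column, encoded by bijections $\pi, \rho \colon \{0, \dots, n-2\} \to \{1, \dots, n-1\}$. The row-versus-column norm matching $|(A_2)_{i,\pi(i)}| = |\nu c_{i+1}(\nu)| = |\nu c_{\pi(i)}(\nu)|$, combined with the injectivity of $k \mapsto c_k(\nu)^2$ on $\{1, \dots, n-1\}$, forces $\pi(i) = i+1$, and likewise $\rho(j) = j+1$. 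Then $A_2A_3$ is automatically diagonal, and the fifth spectral identity yields the scalar relations $(A_2)_{i,i+1}(A_3)_{i+1,i} = -\nu c_{i+1}(\nu)^2$ for $i = 0, \dots, n-2$. Since diagonal unitaries commute with $H_{n,\nu}$, setting $u_0 = 1$ and defining $u_{i+1}$ by $u_{i+1}/u_i = (A_2)_{i,i+1}/(\nu c_{i+1}(\nu))$ yields a diagonal unitary $U$ with $UA_2U^* = E_{n,\nu}$; the scalar relations above then force $UA_3U^* = F_{n,\nu}$ automatically.

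The principal obstacle I anticipate is the diagonality step: the hypotheses furnish directly usable information only at the $y^1$ and $y^{n-1}$ levels of the characteristic polynomial, since at intermediate levels $2 \le k \le n - 2$ the Lagrange-type polynomials are linearly dependent and the corresponding coefficient identities no longer pin down individual principal minors; it is precisely the positive semidefiniteness of the four PSD products, together with Hadamard's inequality at level $n-1$, that bridges this gap and promotes the restricted matching to full diagonality. A secondary subtlety lies in the injectivity of $c_k(\nu)^2$ on $\{1, \dots, n-1\}$: rewriting $c_k^2 = \tfrac{\nu^2}{(1-\nu^2)^2}(\nu^{-2k} - 1)(1 - \nu^{2n-2k})$ exhibits it as the evaluation of a downward parabola in $\nu^{-2k}$, and injectivity amounts to ruling out symmetric collisions $\nu^{-2k_1} + \nu^{-2k_2} = 1 + \nu^{-2n}$, which forces a dedicated case analysis, particularly at $\nu = \pm 1$ where the limits of $c_k^2$ coincide for $k$ and $n-k$.
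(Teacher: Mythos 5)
Your proposal takes a genuinely different route from the paper at the diagonality step, and that part is a nice alternative, but the later reduction to a weighted shift has a real gap which the paper spends considerable effort to close.

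On the diagonality step: the paper obtains diagonality of $A_2A_2^\ast$, $A_2^\ast A_2$, $A_3A_3^\ast$, $A_3^\ast A_3$ by citing the Chagouel--Stessin--Zhu results (Theorems \ref{csz1} and \ref{csz2}), which turn ``joint spectrum is a union of hyperplanes'' into commutativity of self-adjoint tuples. You instead read diagonality off the characteristic polynomial directly. Be careful with the logical order in the $y^{n-1}$ step, though: the coefficient of $y^{n-1}$ is $\sum_i (x\lambda_i - 1)\det(B[\{i\}^c])$, a polynomial of degree at most $1$ in $x$, so for $n\geq 3$ the comparison with the model gives you only two scalar identities, not the individual minors. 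What you actually obtain is the sum $\sum_i \det(B[\{i\}^c]) = \sum_i \prod_{j\neq i}B_{jj}$, and it is Hadamard's inequality (termwise, valid since $B$ is PSD) plus nonnegativity of the gaps that then forces termwise equality, and hence diagonality. You have all the ingredients, but the sentence as written asserts the termwise identity before invoking Hadamard, which reverses the true implication. As long as that is fixed this is a clean, more elementary substitute for the CSZ machinery.

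The genuine gap is in the passage from ``$A_2A_2^\ast$ and $A_2^\ast A_2$ are diagonal'' to ``$A_2$ has exactly one nonzero entry per remaining row and column.'' That deduction (the existence of your bijection $\pi$) requires the nonzero singular values of $A_2$ to be pairwise distinct, equivalently that $k\mapsto c_k(\nu)^2$ be injective on $\{1,\dots,n-1\}$. Orthogonality of rows and columns with prescribed squared norms does \emph{not} by itself force a weighted-permutation pattern: if two singular values coincide, the SVD $A_2 = VDW^\ast$ leaves a nontrivial unitary free inside the degenerate block and $A_2$ can carry several nonzero entries per row and column. You characterize the failure of injectivity as a ``secondary subtlety'' about pinning down $\pi$, but the more basic problem is that $\pi$ may fail to exist. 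The coincidences $\nu^{-2k_1}+\nu^{-2k_2}=1+\nu^{-2n}$ do occur for a finite set of $\nu$ (the paper's Lemma \ref{spectra} and Corollary \ref{different i and j}), and at $\nu=\pm 1$ they occur systematically ($c_k^2 = c_{n-k}^2$ for every $k$), so they are not a corner case: the theorem includes $\nu=\pm1$. The paper's cases 2) and 3) rule out the residual unitary freedom by a combinatorial cycle analysis showing that any exchange in $U\mathscr P^\ast$ would produce forbidden monomials in a joint characteristic polynomial; your proposal supplies no substitute argument for this regime, so as it stands it only proves the theorem for the generic $\nu\notin S$.
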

\vspace{.2cm}

The following Corollary is straightforward, since the coincidence of joint spectra of a tuple implies the coincidence of the pairwise joint spectra.

\begin{corollary}\label{rigidity_for_S_nu2}
 Let $\nu \in [-1,1]\setminus\{0\}$. Suppose that 
 $A_1,A_2,A_3$ are $n$-dimensional matrices such that
\begin{itemize}
\item[(a)] $A_1$ is normal. 
 \item[(b)]  
 \begin{eqnarray*}
& \sigma_p\Big(A_1,A_2A_2^\ast, A_2^*A_2,A_3A_3^*,A_3^*A_3,A_1A_3\Big) \\
& =\sigma_p\Big(H_{n,\nu},E_{n,\nu}E_{n,\nu}^\ast,E_{n,\nu}^\ast E_{n,\nu},F_{n,\nu}F_{n,\nu}^\ast, 
  F_{n,\nu}^\ast F_{n,\nu},  E_{n,\nu}F_{n,\nu}\Big)	
\end{eqnarray*}
\end{itemize}
Then $\Big(A_1,A_2,A_3\Big)$ is unitary equivalent to $\Big(E_{n,\nu},H_{n,\nu},F_{n,\nu)}\Big)$.
\end{corollary}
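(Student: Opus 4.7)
The plan is to derive this corollary directly from Theorem \ref{rigidity_for_S_nu1}, exactly as the author's remark indicates. The key elementary observation I will use is: for any tuple of matrices $(B_1,\dots,B_k)$ and any pair of indices $i<j$, the pairwise proper projective joint spectrum $\sigma_p(B_i,B_j)$ can be recovered from the full proper projective joint spectrum $\sigma_p(B_1,\dots,B_k)$ by intersection with the linear subspace $\{x_\ell=0:\ell\neq i,j\}$. This is immediate from the definitions: setting all $x_\ell=0$ for $\ell\notin\{i,j\}$ in the non-invertibility condition for $\sum_\ell x_\ell B_\ell-I$ yields precisely the non-invertibility condition for $x_iB_i+x_jB_j-I$. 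Consequently, coincidence of joint spectra of two $k$-tuples forces coincidence of all pairwise joint spectra obtained by such coordinate restrictions.

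Applying this observation, I would argue as follows. If the six-variable joint spectra in hypothesis (b) coincide, then all of their pairwise coordinate slices coincide as well. Slicing so as to keep only the first coordinate together with each of the remaining five coordinates yields the five pairwise identities
\[
\sigma_p(A_1,A_2A_2^\ast)=\sigma_p(H_{n,\nu},E_{n,\nu}E_{n,\nu}^\ast),\ \dots,\ \sigma_p(A_1,A_2A_3)=\sigma_p(H_{n,\nu},E_{n,\nu}F_{n,\nu})
\]
that are hypothesized in part (b) of Theorem \ref{rigidity_for_S_nu1}. (Here I read the sixth entry in the tuple on the left of hypothesis (b) as $A_2A_3$, which must be the intended expression, since the corresponding entry on the right is $E_{n,\nu}F_{n,\nu}$ and $A_1$ is assigned the role of $H_{n,\nu}$.) Together with the normality of $A_1$, which is carried over verbatim from hypothesis (a), all the hypotheses of Theorem \ref{rigidity_for_S_nu1} are in place, and that theorem delivers the desired unitary equivalence of $(A_1,A_2,A_3)$ with $(H_{n,\nu},E_{n,\nu},F_{n,\nu})$.

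There is essentially no obstacle here: the corollary is a packaging statement whose hypothesis is visibly stronger than that of the main theorem. The only thing that genuinely needs checking is the slicing identity relating a tuple's joint spectrum to its pairwise joint spectra, and that is the one-line verification above.
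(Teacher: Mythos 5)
Your proof is correct and is essentially the paper's own argument: the paper disposes of the corollary with the single remark that coincidence of a tuple's joint spectrum forces coincidence of the pairwise joint spectra, which is exactly the coordinate-slicing observation you spell out. You are also right that the sixth entry $A_1A_3$ in hypothesis (b) must be a typo for $A_2A_3$ (likewise the ordering in the stated conclusion $(E_{n,\nu},H_{n,\nu},F_{n,\nu})$ is a misprint for $(H_{n,\nu},E_{n,\nu},F_{n,\nu})$), since only then do the slices reproduce the hypotheses of Theorem~\ref{rigidity_for_S_nu1}.
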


\vspace{.2cm}

As $\nu \to 1$, \
$c_k(\nu) \to \sqrt{k(n-k)} $, so that 
the infinitesimal generators $ E_{n,\nu}, F_{n,\nu}$ and $H_{n,\nu}$ converge to 
 \begin{eqnarray}
 &\tilde{F}_n=\left [ \begin{array}{cccc} 0& 0 & \cdots & 0 \\ -\sqrt{n-1} & 0 & \dots &0\\ 0& -\sqrt{2(n-1)}& \cdots &0 \\ \cdot & \cdot & \cdot & \cdot \\ 0 & \cdots & -\sqrt{n-1} & 0 \end{array}\right ], \nonumber \\ 
 &\tilde{H}_n=\left [ \begin{array}{cccc} 1-n & 0 & \cdots & 0\\ 0&3-n& \cdots &0\\ \cdot & \cdot & \cdot& \cdot \\ 0& \cdots & 0 & n-1\end{array}\right ]  \label{limit at nu=1}, \\
 &\tilde{E}_n=\left [ \begin{array}{ccccc} 0 & \sqrt{n-1} & 0 & \cdots &0 \\0 & 0 & \sqrt{2(n-2)} & \cdots &0\\ \cdot & \cdot & \cdot & \cdot & \cdot \\ 0 & 0 & \cdots &0 &\sqrt{n-1}\\ 0 & 0& 0& \cdots & 0\end{array}\right ]. \nonumber
\end{eqnarray}

\vspace{.2cm}

\noindent Even though the tuples $(\tilde{F}_n,\tilde{H}_n,\tilde{E}_n)$ and $(-F_n,-H_n,E_n)$ are equivalent, they are not unitary equivalent. Since the statements of Theorem \ref{rigidity_for_S_nu1} and Corollary \ref{rigidity_for_S_nu2} include adjoints of $\tilde{E}_n$ and $\tilde{F}_n$, a similar rigidity result for the representation \eqref{representation_sl2} does not follow automatically. Nevertheless, a result similar to the one of Theorem \ref{rigidity_for_S_nu1} is true.

\begin{theorem}\label{rigidity_for_sl2_1}
Suppose that $E_n,F_n$, and $H_n$ are given by \eqref{representation_sl2}
  and $A_1,A_2,A_3$ are $n$-dimensional matrices such that the following are true:
 \begin{itemize}
 \item[a)] $A_1$ is normal;
 \item[b)]	Either
 \begin{eqnarray*}
&\sigma_p(A_1,A_2A_2^\ast)=\sigma_p(H_n,E_nE^\ast_n); \  
\sigma_p(A_1,A_2^\ast A_2)=\sigma_p(H_n,E_n^\ast E_n); \\ 
&\sigma_p(A_1,A_3A_3^\ast)=\sigma_p(H_n,F_nF^\ast_n); \  
\sigma_p(A_1,A_2A_3)=\sigma_p(H_n,E_nF_n),
\end{eqnarray*}
or 
\small \begin{eqnarray*}
& \sigma\Big(A_1,  A_2^*A_2,A_2A_2^*,A_3A_3^\ast, A_2A_3\Big) \
 =\sigma\Big(H_n,E_n^\ast E_n, E_nE_n^\ast,F_n F_n^\ast,E_nF_n\Big).	
\end{eqnarray*}\normalsize

\end{itemize}
Then $(A_1,A_2,A_3)$ is unitary equivalent to $(H_n,E_n,F_n)$. 
 
\end{theorem}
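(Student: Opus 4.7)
The plan is to proceed in three stages: reduce to the case $A_1=H_n$, upgrade the joint spectrum equalities to matrix identities for the various products, and finally recover $A_2$ and $A_3$ individually up to a diagonal unitary conjugation.

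First, specializing the second projective variable to zero in any pairwise spectrum equality yields $\sigma(A_1)=\sigma(H_n)=\{n-1,n-3,\dots,-(n-1)\}$, an $n$-tuple of distinct real numbers. Since $A_1$ is normal, it is unitarily diagonalizable; composing with a permutation that matches the order of eigenvalues reduces the problem to $A_1=H_n$. The pair $(A_2,A_3)$ is replaced by its unitary conjugate, which still satisfies all the hypotheses.

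Second, for each Hermitian product $B\in\{A_2A_2^\ast,\,A_2^\ast A_2,\,A_3A_3^\ast\}$, the identity $\sigma_p(H_n,B)=\sigma_p(H_n,B_0)$ (with $B_0$ the corresponding diagonal combination of $E_n,F_n$) lifts to equality of the determinantal polynomials, since both have constant term $(-1)^n$. The right-hand side factors as $\prod_k(xh_k+y(B_0)_{kk}-1)$, a product of $n$ distinct linear factors. Comparing the coefficient of $y$ forces $B_{kk}=(B_0)_{kk}$, and the coefficient of $y^2$, together with the Hermitian identity $B_{ji}=\overline{B_{ij}}$, forces $|B_{ij}|^2=0$ for $i\ne j$. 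Hence $B=B_0$ as matrices. For the non-Hermitian product $A_2A_3$ I would use a Frobenius-norm argument:
\[
\|A_2A_3\|_F^2=\mathrm{tr}(A_3A_3^\ast\cdot A_2^\ast A_2)=\mathrm{tr}(F_nF_n^\ast\cdot E_n^\ast E_n)=\|E_nF_n\|_F^2,
\]
by cyclic invariance and the equalities already established. The joint spectrum identity $\sigma_p(H_n,A_2A_3)=\sigma_p(H_n,E_nF_n)$ matches the diagonal of $A_2A_3$ with that of the diagonal matrix $E_nF_n$, so $\sum_i|(A_2A_3)_{ii}|^2=\|E_nF_n\|_F^2$. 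Therefore $\sum_{i\ne j}|(A_2A_3)_{ij}|^2=0$, i.e.\ $A_2A_3=E_nF_n$ as matrices.

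Third, the remaining and most delicate task is to promote these product identities to unitary equivalence of the individual operators. From the shape of $E_nE_n^\ast$ and $E_n^\ast E_n$, the matrix $A_2$ has $(n-1)$-th row and $0$-th column zero, with $\ker A_2=\C e_0$; similarly the $0$-th row of $A_3$ vanishes. Using $A_2A_3=E_nF_n$ column by column, each column $v_j$ of $A_3$ must satisfy $A_2v_j=(j+1)(n-j-1)e_j$ and lie in $(\C e_0)^\perp$, which pins down $v_j$ once $A_2$ is known. Combining this with the row-orthogonality of $A_3$ (encoded in $A_3A_3^\ast=F_nF_n^\ast$), the goal is to deduce that $A_2$ must be supported on the superdiagonal $\{(i,i+1)\}$ and $A_3$ on the subdiagonal $\{(i+1,i)\}$. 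I expect this to be the main obstacle: for $n\ge 4$ the row-norm sequence $(i+1)(n-i-1)$ is symmetric under $i\leftrightarrow n-2-i$, so $A_2$ is not determined by its two Hermitian reductions alone, and the equality $A_2A_3=E_nF_n$ has to be exploited essentially to break this residual symmetry. Once the superdiagonal/subdiagonal support is established, each nonzero entry has its modulus fixed by $A_2A_2^\ast$ (respectively $A_3A_3^\ast$), and the phases of $(A_2)_{i,i+1}$ and $(A_3)_{i+1,i}$ are linked by $(A_2)_{i,i+1}(A_3)_{i+1,i}=(E_nF_n)_{ii}$. A recursively-defined diagonal unitary $V=\operatorname{diag}(v_0,\dots,v_{n-1})$ with $v_0=1$ and $v_{i+1}=v_i\,\overline{(A_2)_{i,i+1}}/|(A_2)_{i,i+1}|$ then simultaneously conjugates $A_2$ to $E_n$ and $A_3$ to $F_n$ while commuting with $H_n$, yielding the required unitary equivalence. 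For the alternative hypothesis involving the $5$-tuple joint spectrum, the individual pairwise conditions are recovered by restricting the projective variables to appropriate coordinate hyperplanes, after which the same argument applies.
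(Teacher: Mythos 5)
Your first two stages are correct, and in fact they take a more elementary route than the paper does. Reducing to $A_1=H_n$ is standard. For Hermitian $B$ with $\sigma_p(H_n,B)=\sigma_p(H_n,B_0)$, your coefficient-of-$y$ argument pins down the diagonal, and your coefficient-of-$y^2$ argument works provided one also evaluates the residual identity $\sum_{i<j}|B_{ij}|^2\prod_{k\ne i,j}(xh_k-1)=0$ at $x=0$: each summand has constant term $(-1)^{n-2}$, so the nonnegative coefficients must all vanish. This replaces the paper's appeal to Theorems~\ref{csz1} and \ref{csz2}. Your Frobenius-norm trick for $A_2A_3=E_nF_n$ is likewise a clean alternative to the paper's spectral-compression argument via Theorem~\ref{compressions}.

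The third stage, however, is not a proof but an outline of a hope, and as you suspect it is the crux. Unfortunately it is a genuine gap that the four product identities alone cannot close. Take $n=3$, $A_1=H_3=\mathrm{diag}(2,0,-2)$, and
\[
A_2=\begin{pmatrix}0&\sqrt 2&-\sqrt 2\\ 0&\sqrt 2&\sqrt 2\\ 0&0&0\end{pmatrix},\qquad
A_3=\begin{pmatrix}0&0&0\\ \tfrac{1}{\sqrt 2}&\tfrac{1}{\sqrt 2}&0\\ -\tfrac{1}{\sqrt 2}&\tfrac{1}{\sqrt 2}&0\end{pmatrix}.
\]
A direct computation gives $A_2A_2^*=E_3E_3^*=\mathrm{diag}(4,4,0)$, $A_2^*A_2=E_3^*E_3=\mathrm{diag}(0,4,4)$, $A_3A_3^*=F_3F_3^*=\mathrm{diag}(0,1,1)$, and $A_2A_3=E_3F_3=\mathrm{diag}(2,2,0)$, so all four pairwise proper joint spectra (indeed the product matrices themselves) match, and the five-tuple version matches as well. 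Yet any unitary commuting with $H_3$ must be diagonal and hence preserves superdiagonal support; since $(A_2)_{02}\neq 0$, the tuple $(H_3,A_2,A_3)$ is not unitarily equivalent to $(H_3,E_3,F_3)$. Thus the symmetry $i\leftrightarrow n-2-i$ you flagged is not broken by $A_2A_3=E_nF_n$, and the deduction of superdiagonal support cannot be completed from the stated hypotheses.

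For comparison, the paper's proof diverges at exactly this point: it asserts $A_2=\Lambda E_n$ by referring back to the proof of Theorem~\ref{rigidity_for_S_nu1}, then uses Theorem~\ref{compressions} on $\sigma_p(A_1,A_2A_3)$ together with a Hilbert--Schmidt norm count to get $A_3=F_n\Lambda^*$. But the cited step $A_2=\Lambda E_n$ is obtained there, in the case $|\nu|=1$, by deriving a contradiction with $\sigma_p(A_1,A_2)=\sigma_p(H_{n,\nu},E_{n,\nu})$, which is not among the hypotheses of either theorem. So your proposal and the paper's written argument stall at the same place, and the example above indicates that some additional hypothesis --- for instance $\sigma_p(A_1,A_2)=\sigma_p(H_n,E_n)$, which appears in an earlier commented-out version of the statement --- is genuinely required before either route can be completed.
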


\vspace{.2cm}

\section{\textbf{Background results}}\label{background}
\subsection{Twisted $S_\nu U(2)$ pseudogroups and infinitesimal generators of their representations}

As mentioned above, matrix pseudogroups $S_\nu U(2)$ were introduced by Woronowicz in \cite{W} as deformations of $SU(2)$. Here we recall necessary definitions and basic results regarding representations of $S_\nu U(2)$. The details might be found in \cite{W}. Please, note that in order to adjust the material to our presentation, we slightly changed the notation adopted in \cite{W}.

\vspace{.2cm}

Let $\nu \in [-1,1]$, and $A$ be a $C^*$-algebra generated by 2 elements, $\alpha$ and $\gamma$, satisfying the conditions:
\begin{eqnarray}
&\alpha^*\alpha +\gamma^* \gamma=I, \ \alpha \alpha^* + \nu^2\gamma^*\gamma=I \nonumber \\
& \label{commutations_Snu}\\ 
&\gamma^* \gamma=\gamma \gamma^*, \ \alpha \gamma= \nu \gamma \alpha, \ \alpha \gamma^*=\nu \gamma^* \alpha \nonumber.
\end{eqnarray}	

$A$ is considered to be the non-commutative algebra of continuous functions on $S_\nu U(2)$.  Let 
$$u_\nu=\left [\begin{array}{rr}\alpha & -\nu \gamma^* \\ \gamma & \alpha^* \end{array} \right ]\in M_2 \otimes A . $$
Then 
\begin{itemize}
\item[1.] The $*$-algebra $\mathcal A$ generated by elements of $u$ is dense in $A$.
\item[2)] $u_\nu$ is an invertible element of $M_2\otimes A$.
\item[3.] If $\nu \neq 0$, the structure of Hopf algebra is introduced on $A$ by 
$$\Phi: A\to A\otimes A, $$  
$$ (id\otimes \Phi)u_\nu= \left [\begin{array}{rr} \alpha \otimes \alpha -\nu \gamma^* \otimes\gamma & -\nu \big (\alpha \otimes \gamma^*+ \gamma^*\otimes \alpha^*\big ) \\ \gamma \otimes \alpha +\alpha^*\gamma &-\nu \gamma \otimes \gamma^* + \alpha^* \otimes \alpha^*  \end{array} \right ]$$

and
\begin{eqnarray*}
&k: {\mathcal A} \to {\mathcal A}  \\
& \forall   a\in {\mathcal A}, \ \ k(k(a^*)^*)=a; \ \ (id\otimes k)u_\nu=u_\nu ^{-1}. \ \end{eqnarray*}
\end{itemize}

\vspace{.2cm}

We use the notation \  $u\odot v$	\ for the product of $N\times N$ matrices with entrees in $*$-algebras $B$ and $B^\prime$ where the usual product of matrix elements is replaced by tensor product, that is
$$ u=\left [ u_{ij}\right ]_{i,j=1}^N, \ v=\left [ v_{ij}\right ]_{i,j=1}^N \Longrightarrow u\odot v=\left [w_{ij}\right ]_{i,j=1}^N, \ w_{ij}= \sum_{k=1}^N a_{ik}\otimes b_{kj}.$$
Let $V$ be a finite dimensional vector space. We say that 	$v$ is a representation of $S_\nu U(2)$ acting on $V$, if $v$ is an invertible element of the algebra $B(V)\otimes A$ and
$$(id\otimes \Phi)v=v\odot v\in B(V)\otimes A\otimes A, $$
where
$$(m_1\otimes  a)\odot (m_2\otimes  b)=(m_1m_2)\otimes a\otimes b.$$
In particular, $u_\nu$  determines a representation acting on $\C^2$ which is called \textit{the fundamental representation of $S_\nu U(2)$}.

\vspace{.2cm}

Further, let $\mathcal M$ be subalgebra with unity in $M_4$ consisting of all 4x4 matrices (with complex entries) having non-zero elements only in the first row and on the main diagonal. 	The following elements of ${\mathcal M}$
$$\alpha_m(\nu)=\left [ \begin{array}{rlll}1&0&1&0\\ 0&\nu^{-1}&0&0\\ 0&0&\nu^{-2}&0 \\ 0&0&0&\nu^{-1} \end{array} \right ], \ \gamma_m(\nu)=\left [ \begin{array}{rrrr} 0&0&0&-\nu \\ 0&0&0&0 \\0&0&0&0\\0&0&0&0\end{array}\right ] $$
$$ \alpha_m^*(\nu)=\left [\begin{array}{rrrr} 1&0&-\nu^2& 0 \\ 0&\nu &0 &0 \\ 0&0&\nu^2 &0\\ 0&0&0&\nu \end{array} \right ], \ \gamma_m^*(\nu)=\left [ \begin{array}{rlrr} 0&\nu^{-1}&0&0 \\ 0&0&0&0 \\ 0&0&0&0 \\ 0&0&0&0 \end{array} \right ]$$
satisfy the commutation relations \eqref{commutations_Snu}. Therefore, $\exists \ \ f:\mathcal A \to \mathcal M$	such that $$f(\alpha)=\alpha_m(\nu), \ f(\gamma)=\gamma_m(\nu), \ f(\alpha^*)=\alpha_m^*(\nu), \ f(\gamma^*)=\gamma_m^(\nu)*.$$
Denote by $e_0^\nu(a),\chi_0^\nu(a),\chi_1^\nu(a),\chi_2^\nu(a),f_0^\nu(a)f_1^\nu(a),f_2^\nu(a)$ linear fuctionals on $\mathcal A$ such that
$$f(a)=\left [\begin{array}{cccc} e_0^\nu(a) & \chi_0^\nu(a) &\chi_1^\nu(a) &\chi_2^\nu(a) \\ 0&f_0^\nu(a) & 0&0\\ 0&0&f_1^\nu(a) & 0 \\ 0&0&0&f_2^\nu(a)\end{array} \right ], \ a\in \mathcal A . $$

For a representation $v$ of $S_\nu U(2)$ acting on $V$, \ $v\in B(\mathcal V)\otimes \mathcal A$, therefore, for a linear functional $\psi$ on $A$ we may introduce the operator
$$(id\otimes \psi)v\in B(V) .$$

\begin{definition}
The operators:
\begin{eqnarray*}
E_\nu(v)=(id\otimes \chi_0^\nu)v, \ H_\nu(v)=(id \otimes \chi_1^\nu)v, \ F_\nu(v)=(id \otimes \chi_2^\nu)v	
\end{eqnarray*}
are called \textit{infinitesimal generators } of $v$.	
\end{definition}

For the fundamental representation $u$ of $S_\nu U(2)$ these generators are:
$$E(u)=\left [ \begin{array}{cc} 0&1 \\ 0& 0 \end{array}\right ], \  H(u)=\left [ \begin{array}{cc} 1&0 \\ 0& -\nu^2 \end{array}\right ], \ F(u)=\left [ \begin{array}{cc} 0&0\\-\nu &0 \end{array}\right ]. $$


When $\nu=1$	these matrices generate $\mathfrak s \mathfrak l (2)$, as they coincide with \eqref{generators_sl2} except for $F(u)=-f$.
\vspace{.2cm}
\begin{theorem}[Woronowicz \cite{W}]
Let $v$ and $v^\prime$ be finite dimensional representations of $S_\nu U(2)$. Then
\begin{itemize}
	\item Infinitesimal generators determine a finite dimensional representation of $S_\nu U(2)$. For equivalent representations $v$ and $v^\prime$ the sets of infinitesimal generators are equivalent.
	\item The infinitesimal genrators of $v$ satisfy the following commuting conditions:
	\begin{eqnarray}
	\nu F_\nu(v)E_\nu(v))-\frac{1}{\nu}E_\nu(v))F_\nu(v)=H_\nu(v) \nonumber \\
	\nu^2 H_\nu(v)E_\nu(v)-\frac{1}{\nu^2}E_\nu(v)H_\nu(v)=(1+\nu^2)E_\nu(v) \label{infinitesimal_commutations} \\
	\nu^2 F_\nu(v)H_\nu(v)-\frac{1}{\nu^2}H_\nu(v)F_\nu(v)=(1+\nu^2)F_\nu(v)	 \nonumber
	\end{eqnarray}
	\end{itemize}
If $V$ is Hilbert, and $v$ is self-adjoint, then
$$-\nu E_\nu(v)^*=F_\nu(v), \ \ H_\nu^*(v)=H_\nu(v). $$	
\end{theorem}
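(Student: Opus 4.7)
The plan is to use the $*$-algebra morphism $f:\mathcal{A}\to\mathcal{M}$ together with the comultiplication $\Phi$ on $A$, reducing every claim to a calculation with the seven matrix-entry functionals $e_0^\nu,\chi_0^\nu,\chi_1^\nu,\chi_2^\nu,f_0^\nu,f_1^\nu,f_2^\nu$. The central observation is the convolution formula
\[
\bigl((id\otimes\psi)v\bigr)\bigl((id\otimes\psi')v\bigr)=\bigl(id\otimes(\psi*\psi')\bigr)v,\qquad \psi*\psi':=(\psi\otimes\psi')\circ\Phi,
\]
valid for any linear $\psi,\psi'$ on $\mathcal{A}$ and any representation $v$, and a direct consequence of $(id\otimes\Phi)v=v\odot v$. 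An analogous identity $((id\otimes\psi)v)^*=(id\otimes\psi^*)v$, with $\psi^*(a):=\overline{\psi(a^*)}$, is available whenever $v$ is self-adjoint.

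To derive the commutation relations \eqref{infinitesimal_commutations}, I would prove the three identities
\[
\nu\,\chi_2^\nu*\chi_0^\nu-\nu^{-1}\chi_0^\nu*\chi_2^\nu=\chi_1^\nu,
\]
and the analogues with $(\chi_1^\nu,\chi_0^\nu)$ and $(\chi_2^\nu,\chi_1^\nu)$, at the level of functionals on $\mathcal{A}$. Each $\chi_i^\nu*\chi_j^\nu$ is a linear functional that can be evaluated on any monomial in $\alpha,\gamma,\alpha^*,\gamma^*$ by first expanding $\Phi$ of that monomial, then pushing each tensor factor through the homomorphism $f$, and finally reading off the appropriate pair of matrix entries. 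Multiplicativity of $\Phi$ and of $f$ reduces the whole check to the four algebra generators, combined with compatibility with \eqref{commutations_Snu}. The convolution formula then lifts these functional identities to operator identities on $V$ for every representation $v$.

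For the uniqueness half of the first bullet, once the commutation relations are in place the diagonal functionals $e_0^\nu,f_0^\nu,f_1^\nu,f_2^\nu$ can be expressed, via the convolution formula, as polynomial functions of the off-diagonal $\chi_i^\nu$, so that all seven operators $(id\otimes\psi)v$ with $\psi$ ranging over a basis of $f(\mathcal{A})^*$ are determined by $E_\nu(v),H_\nu(v),F_\nu(v)$. A density argument together with iteration of $(id\otimes\Phi)v=v\odot v$ reconstructs $v$ from the operators $(id\otimes\psi)v$ as $\psi$ runs over the full linear dual of $\mathcal{A}$; applying this reconstruction simultaneously to two representations whose generator triples are intertwined by a fixed $S\in B(V)$ shows that the same $S$ intertwines the representations themselves.

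The self-adjoint case reduces to computing $(\chi_i^\nu)^*$ on the four generators using the matrices $\alpha_m(\nu),\gamma_m(\nu),\alpha_m^*(\nu),\gamma_m^*(\nu)$: one finds $(\chi_1^\nu)^*=\chi_1^\nu$ and $(\chi_0^\nu)^*$ is the appropriate scalar multiple of $\chi_2^\nu$, which upon applying the self-adjoint identity for $v$ yields $H_\nu(v)^*=H_\nu(v)$ and $-\nu E_\nu(v)^*=F_\nu(v)$. The main obstacle throughout is the middle step: correctly identifying each $\chi_i^\nu*\chi_j^\nu$ as a matrix-entry readout of $(id\otimes\Phi)u_\nu$ restricted to the generators, while keeping track of every power of $\nu$. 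Once that bookkeeping is done, all three parts of the statement follow by the same two principles: the convolution formula and the fact that $f$ is a $*$-algebra morphism.
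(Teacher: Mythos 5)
The paper does not prove this statement: it is quoted verbatim as a known result of Woronowicz (cited from \cite{W}), so there is no internal proof for your sketch to be compared against. Your outline is nonetheless a reasonable reconstruction of the standard quantum-group argument. In particular, the convolution identity $\bigl((id\otimes\psi)v\bigr)\bigl((id\otimes\psi')v\bigr)=(id\otimes(\psi*\psi'))v$ with $\psi*\psi':=(\psi\otimes\psi')\circ\Phi$ is correct, and it does follow directly from $(id\otimes\Phi)v=v\odot v$ once one unwinds the definition of $\odot$; this is indeed the right device for lifting functional identities among the $\chi_i^\nu$ to the operator relations \eqref{infinitesimal_commutations}.

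There are, however, genuine gaps. The claim that ``multiplicativity of $\Phi$ and of $f$ reduces the whole check to the four algebra generators'' is not automatic. Multiplicativity of $f$ makes each $\chi_i^\nu$ an $(e_0^\nu,f_i^\nu)$-twisted derivation, $\chi_i^\nu(ab)=e_0^\nu(a)\chi_i^\nu(b)+\chi_i^\nu(a)f_i^\nu(b)$, but $\chi_i^\nu*\chi_j^\nu$ is a coalgebraic (convolution) product, not a matrix entry of a homomorphism, so it is not a priori a twisted derivation of the same type. Before agreement on the generators implies agreement on $\mathcal A$, you must first show that $\nu\,\chi_2^\nu*\chi_0^\nu-\nu^{-1}\chi_0^\nu*\chi_2^\nu$ and $\chi_1^\nu$ satisfy the same cocycle condition with respect to the product of $\mathcal A$; that computation is precisely the heart of the verification and it is not supplied. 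More seriously, the uniqueness half of the first bullet — that $(E_\nu,H_\nu,F_\nu)$ determines $v$, and that equivalent generator triples yield equivalent representations — is the substantial content of Woronowicz's theorem. Invoking ``a density argument together with iteration of $(id\otimes\Phi)v=v\odot v$'' does not carry that weight; in Woronowicz's own development this requires the structure theory of the matrix coefficients of $S_\nu U(2)$ (a Peter–Weyl type decomposition) and a careful analysis of the map $f:\mathcal A\to\mathcal M$. As written, that half of your proposal is essentially unproved. The self-adjoint part of the outline is fine.
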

\vspace{.2cm}

Any tuple $(A_0,A_1,A_2)$ satisfying  commuting relations \eqref{infinitesimal_commutations} is called \textit{infinitesimal representation} of $S_\nu U(2)$.	 The following result claims that for each $n\in{\mathbb N}$ there exists only one, up to an equivalence, $n$-dimensional representation of $S_\nu U(2)$ and specifically describes it.
\begin{theorem}[Wornowicz \cite{W}]\label{woronowicz_generators}
Let $(A_0,A_1,A_2)$ be a finite dimensional irreducible infinitesimal representation of $S_\nu U(2)$. Then, the eigenvalues of $A_1$ are real and, denoting  \ $\lambda_{max}$ \ \ the maximal one, we have the following possibilities:
\begin{itemize}
\item[1)] $\lambda_{max}=-\frac{\nu^2}{1-\nu^2}.$ \ \ 
Then \ \ $dim (V)=1$, \ \ and $\exists \ c\in \C	$ such that
$$A_0=c\frac{\nu}{1-\nu^2}I, \ A_1=-\frac{\nu^2}{1-\nu^2}I, \ A_2=\frac{1}{c}\frac{\nu^2}{1-\nu^2}I. $$ 
\item[2)] $\lambda_{max}=\frac{\nu^2}{1-\nu^2}(\nu^{-4n}-1)$, where $n=0,1/2,1,3/2,..$ is a non-negative integer or half-integer. Then  \ $dim (V)=2n+1$ \ and  \  $\exists  \ \eta_{-n},...,\eta_n$ - a basis of $V$ such that
\begin{equation}\label{generators_explicit}
A_0\eta_k=-c_{k+1}\eta_{k+1}, \ A_1\eta_k=\frac{\nu^2}{1-\nu^2}(\nu^{-4k}-1)\eta_k, \ A_2\eta_k=\nu c_k\eta_{k-1}, 
\end{equation}
where $c_k=\frac{\nu}{1-\nu^2}\big[(\nu^{-2k}-\nu^{2n})(\nu^{-2n}-\nu^{2(k-1)})\big]^{1/2} $ and $\eta_{n+1}=0$..

\item[3)] If $V$ is Hilbert, and the representation is self-adjoint, 	the case 1) cannot happen, and the basis $\eta_{-n},...,\eta_{n}$ is orthogonal.
\end{itemize}	
\end{theorem}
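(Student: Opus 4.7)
The plan is to transcribe the classical highest-weight argument for $\mathfrak{sl}(2)$-modules into the $q$-deformed setting defined by \eqref{infinitesimal_commutations}. Writing $E=A_{0}$, $H=A_{1}$, $F=A_{2}$, I would first rearrange the second and third relations into
\[
HE=\nu^{-4}EH+\nu^{-2}(1+\nu^{2})E,\qquad HF=\nu^{4}FH-\nu^{2}(1+\nu^{2})F,
\]
so that on any $H$-eigenvector $E$ and $F$ act affinely on the eigenvalue. The key simplifying substitution is $\lambda=\frac{\nu^{2}}{1-\nu^{2}}(\mu-1)$: after it, the two affine actions become the pure scalings $\mu\mapsto\nu^{-4}\mu$ under $E$ and $\mu\mapsto\nu^{4}\mu$ under $F$, with a single fixed point at $\mu=0$ corresponding to the eigenvalue $-\nu^{2}/(1-\nu^{2})$ appearing in case~(1).

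Next I would construct a highest-weight vector. Since $V$ is finite-dimensional over $\mathbb{C}$, $H$ has an eigenvector $\xi$; iterating $E$ produces $H$-eigenvectors whose $\mu$-parameters form a geometric progression of common ratio $\nu^{-4}\neq 1$, so away from the fixed point these iterates are either zero or pairwise linearly independent. Finite-dimensionality therefore forces the chain to terminate at some $\xi_{\max}$ with $E\xi_{\max}=0$, whose $\mu$-parameter I write as $\nu^{-4n}$ for some real $n$. Setting $\eta_{n}:=\xi_{\max}$, I would define the lowering chain $\eta_{k-1}:=(\nu c_{k})^{-1}F\eta_{k}$ with scalars $c_{k}$ still to be determined; each $\eta_{k-1}$ is then an $H$-eigenvector with the eigenvalue prescribed in \eqref{generators_explicit}, and finite-dimensionality again forces $F\eta_{m}=0$ (equivalently $c_{m}=0$) at some step.

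The coefficients $c_{k}$ are pinned down by the first commutation relation $\nu FE-\nu^{-1}EF=H$. Feeding in the ansatz $E\eta_{k}=-c_{k+1}\eta_{k+1}$ and evaluating both sides on $\eta_{k}$ yields the scalar recursion
\[
c_{k}^{2}-\nu^{2}c_{k+1}^{2}=\frac{\nu^{2}}{1-\nu^{2}}(\nu^{-4k}-1),
\]
together with the boundary condition $c_{n+1}=0$ read off from $E\eta_{n}=0$. Iterating backwards produces the explicit closed form claimed in the theorem, and inspection shows that $c_{k}$ first vanishes at $k=-n$, forcing $m=-n$, $\dim V=2n+1$, and hence $2n\in\mathbb{Z}_{\ge 0}$; irreducibility then identifies $V$ with the span of $\eta_{-n},\dots,\eta_{n}$. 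In the degenerate situation $\mu_{\max}=0$, both $E$ and $F$ preserve the unique $\lambda_{0}$-eigenspace, so irreducibility forces $\dim V=1$ with $A_{1}=-\frac{\nu^{2}}{1-\nu^{2}}I$; writing $A_{0}=cI$, the first relation then determines $A_{2}=c^{-1}\frac{\nu^{2}}{1-\nu^{2}}I$, giving case~(1).

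For the self-adjoint statement I would invoke the identity $F=-\nu^{-1}E^{*}$ from the preceding theorem: in case~(1) it collapses to $|c|^{2}=-\nu^{2}$, which is impossible for real nonzero $\nu$, ruling the degenerate case out; in case~(2), the distinct real eigenvalues of the self-adjoint operator $H$ immediately make the basis $\eta_{-n},\dots,\eta_{n}$ pairwise orthogonal. The main obstacle is the algebra of the recursion: confirming that the proposed closed form for $c_{k}^{2}$ both satisfies the three-term recursion and vanishes first precisely at $k=-n$ (and not at some intermediate index) is bulky but essentially mechanical, while the rest of the argument is a direct transcription of the classical $\mathfrak{sl}(2)$ highest-weight machinery.
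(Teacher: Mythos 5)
The paper does not prove Theorem~\ref{woronowicz_generators}: it is stated as a background result attributed to Woronowicz's paper~\cite{W}, with no proof given here. There is therefore no ``paper's own proof'' against which your argument can be measured, and the question of whether you follow the same route is moot.

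As a free-standing sketch, your argument is the standard highest-weight construction for the $q$-deformed enveloping algebra, and conceptually it is sound: rewriting the last two relations of \eqref{infinitesimal_commutations} as affine actions on the $H$-eigenvalue, passing to $\mu = 1 + \frac{1-\nu^2}{\nu^2}\lambda$ to turn them into the scalings $\mu\mapsto\nu^{\mp 4}\mu$ with a fixed point at $\mu=0$, producing a highest-weight vector by finite-dimensionality, lowering down the chain, and pinning the coefficients through the first relation is exactly the right skeleton. A few points deserve care. First, you should not simply \emph{posit} that the highest-weight $\mu$-parameter equals $\nu^{-4n}$; it must be derived. Writing the unnormalized lowering chain and solving the recursion for the $E$-coefficients $a_j$ with $a_0=0$, one finds $a_j \propto (1+\nu^2+\cdots+\nu^{2(j-1)})(\nu^{2(j-1)}\mu_0-1)$, so the termination condition $a_{m+1}=0$ (where $m+1=\dim V$) forces $\mu_0=\nu^{-2m}=\nu^{-4n}$ with $2n=m\in\mathbb{Z}_{\ge 0}$; this step is essential and is where the half-integrality of $n$ actually comes from. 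Second, your assertion that iterating backwards ``produces the explicit closed form claimed in the theorem'' would fail as written, because the closed form in the theorem statement contains a typo: the factor $\nu^{2(k-1)}$ should read $\nu^{2(1-k)}$, as one sees by comparing with the correct formula \eqref{c} used elsewhere in the paper; with the exponent as printed, $c_k$ vanishes at $k=1-n$ instead of at $k=n+1$ and does not satisfy the three-term recursion you derive. Third, in the self-adjoint case the paper's identity is $F_\nu = -\nu E_\nu^{*}$, not $F=-\nu^{-1}E^{*}$; either version rules out case~1), but you should quote the relation correctly. Finally, in the degenerate case you invoke irreducibility to get $\dim V=1$; it is worth saying explicitly that when all $\mu$-parameters vanish, $E$, $F$, and $H$ each map every $H$-eigenspace to itself with $H=\lambda_0 I$, so any $H$-eigenvector spans an invariant line.
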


\subsection{Joint spectra of commuting tuples} 
The following results could be found in \cite{CSZ}.

\begin{theorem}[Theorem A, \cite{CSZ}]\label{csz1}
Suppose $(A_1,\cdots,A_n)$ is a tuple of compact, self-adjoint operators acting on a Hilbert
space $H$. Then the operators in $(A_1,...,A_n)$ pairwise commute if and only if $\sigma_p(A_1,...,A_n)$
consists of countably many, locally finite, complex hyperplanes in $\C^n$.	
\end{theorem}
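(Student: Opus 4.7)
The forward implication is straightforward: commuting compact self-adjoint operators can be simultaneously diagonalized by the spectral theorem. Let $\{e_k\}_{k\ge 1}$ be a common orthonormal eigenbasis with $A_j e_k = \lambda_j^{(k)} e_k$. In this basis the operator $x_1 A_1 + \cdots + x_n A_n - I$ is diagonal with entries $\sum_j x_j \lambda_j^{(k)} - 1$, so $\sigma_p(A_1,\ldots,A_n) = \bigcup_k \bigl\{x \in \C^n : \sum_j x_j \lambda_j^{(k)} = 1\bigr\}$, a countable union of complex hyperplanes. Compactness of each $A_j$ forces $\lambda_j^{(k)} \to 0$ as $k \to \infty$, so the normal vectors $\lambda^{(k)} = (\lambda_1^{(k)},\ldots,\lambda_n^{(k)})$ shrink to zero. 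Consequently, on any bounded set $\{|x| \le R\}$ the linear form $\langle \lambda^{(k)},x\rangle$ has modulus less than $1$ for all but finitely many $k$, giving local finiteness.

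For the reverse direction, fix a hyperplane $L_k = \{x : \langle \lambda^{(k)}, x\rangle = 1\}$ appearing in $\sigma_p$ with multiplicity $m_k$. Since the $A_j$ are self-adjoint, the associated Fredholm-type entire function $x \mapsto \det(I - \sum_j x_j A_j)$ has real coefficients, so after relabeling we may take $\lambda^{(k)} \in \R^n$. For a generic real $x \in L_k$ meeting no other hyperplane, the self-adjoint operator $\sum_j x_j A_j$ has $1$ as an isolated eigenvalue of geometric multiplicity $m_k$; denote the $1$-eigenspace by $V_k^{(x)}$ and the orthogonal projection onto it by $P_k^{(x)}$.

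The main step, and the principal obstacle, is to show that $V_k^{(x)}$ is actually independent of $x \in L_k$ and consists of common eigenvectors of the $A_j$ with eigenvalues $\lambda_j^{(k)}$. My plan is to invoke Rellich's theorem on analytic perturbation of self-adjoint operators: parameterize $x(t)$ along $L_k$ and differentiate the eigenvalue equation $(I - \sum_j x_j(t) A_j) v(x(t)) = 0$ in a tangent direction $\tau \perp \lambda^{(k)}$. This yields $\sum_j \tau_j A_j v \in \operatorname{range}(I - \sum_j x_j A_j) = (V_k^{(x)})^\perp$; since $\tau$ ranges over the full hyperplane $(\lambda^{(k)})^\perp$, the $n$-tuple of compressions $(P_k^{(x)} A_j P_k^{(x)})_{j=1}^n$ on $V_k^{(x)}$ is parallel to $\lambda^{(k)}$. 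Combined with $\sum_j x_j P_k^{(x)} A_j P_k^{(x)} = P_k^{(x)}$, which is immediate from $\sum_j x_j A_j v = v$ on $V_k^{(x)}$ together with $\langle \lambda^{(k)}, x \rangle = 1$, this forces $P_k^{(x)} A_j P_k^{(x)} = \lambda_j^{(k)} P_k^{(x)}$. A further tangential-variation argument applied to the orthogonal residues $(I - P_k^{(x)}) A_j v$, as $x$ varies over $L_k$, should then pin down the exact eigenvector identity $A_j v = \lambda_j^{(k)} v$ for every $v \in V_k^{(x)}$, showing simultaneously that $V_k^{(x)}$ is constant in $x$.

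Once a common eigenspace $V_k$ has been extracted from each hyperplane, distinct hyperplanes give distinct eigenvalue tuples $\lambda^{(k)} \ne \lambda^{(l)}$, so $V_k \perp V_l$ by self-adjointness of any $A_j$ that separates them. The direct sum $\bigoplus_k V_k$ is invariant under every $A_j$; its orthogonal complement is also invariant, and if nontrivial would carry its own spectral decomposition and generate additional hyperplanes in $\sigma_p$, contradicting the assumption that $\{L_k\}$ exhausts $\sigma_p$. Hence $\bigoplus_k V_k = H$, and an orthonormal basis obtained by diagonalizing within each $V_k$ simultaneously diagonalizes all the $A_j$, so they pairwise commute.
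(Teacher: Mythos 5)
Your forward direction is essentially correct: simultaneous diagonalization produces the hyperplanes, and compactness (so $\lambda^{(k)}\to 0$ in $\C^n$) gives local finiteness. One small point worth tightening: in infinite dimensions a diagonal operator can fail to be invertible even when every diagonal entry is nonzero, so you should observe that the entries $\sum_j x_j\lambda_j^{(k)}-1\to -1$ as $k\to\infty$, which rules this out. Also, the justification that $\lambda^{(k)}\in\R^n$ is not quite what you wrote: a real-coefficient function can factor into non-real linear forms (e.g.\ $1+x^2$). The correct reason is that for real $x_0$ the one-variable slice $z\mapsto\det\bigl(I-z\sum_j (x_0)_j A_j\bigr)$ has only real zeros because $\sum_j(x_0)_jA_j$ is self-adjoint; since the zeros are $1/\langle\lambda^{(k)},x_0\rangle$, this forces $\langle\lambda^{(k)},x_0\rangle\in\R$ for every real $x_0$, hence $\lambda^{(k)}\in\R^n$.

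The genuine gap is exactly where you flag ``the principal obstacle.'' You correctly reach the compression identity $P_k^{(x)}A_jP_k^{(x)}=\lambda_j^{(k)}P_k^{(x)}$ via Rellich along $L_k$ (the argument: the map $\tau\mapsto P_k^{(x)}\!\sum_j\tau_jA_jP_k^{(x)}$ kills the tangent hyperplane $(\lambda^{(k)})^\perp$, and $\sum_jx_jP_k^{(x)}A_jP_k^{(x)}=P_k^{(x)}$ then pins the image to $\C P_k^{(x)}$ and the coefficients to $\lambda^{(k)}$). But the ``further tangential-variation argument applied to the orthogonal residues'' is vacuous: writing $A_jv=\lambda_j^{(k)}v+w_j$ with $w_j\perp V_k^{(x)}$, the only thing first-order tangential variation yields is $\sum_j\tau_jA_jv\in(V_k^{(x)})^\perp$, i.e.\ $\sum_j\tau_jw_j\in(V_k^{(x)})^\perp$ — which is automatic, not new. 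To get $w_j=0$ you genuinely need information beyond tangential first order: e.g.\ a second-order perturbation argument along a transversal to $L_k$ (using that the perturbed eigenvalue $\langle\lambda^{(k)},x\rangle$ is exactly affine in $x$, so the second-order effective Hamiltonian $P_kVQ_k(\mu_k-T)^{-1}Q_kVP_k$ vanishes) combined with an ordering trick (take $\mu_k$ maximal so that $(\mu_k-\mu_m)^{-1}>0$ for all $m$, forcing each $P_mVP_k=0$), or an argument comparing the Fredholm determinant of the full pencil with that of its block-diagonal part. The step you wave at is precisely where all the work of the theorem lies, and the proposed mechanism does not close it. There is also a minor issue at the end: the invariant complement $(\bigoplus_kV_k)^\perp$ need not ``generate additional hyperplanes'' — it could be a common kernel of all $A_j$, which contributes nothing to $\sigma_p$; this is harmless for the conclusion but the reasoning as stated is wrong.
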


\vspace{.2cm}

\begin{theorem}[Theorem B, \cite{CSZ}]\label{csz2}
Suppose $(A_1,\cdots,A_n)$ is a tuple of $N\times N$ normal matrices. Then the 
following conditions are equivalent:
\begin{enumerate}
\item[(a)] The matrices in $(A_1,...,A_n)$ pairwise commute.
\item[(b)] $\sigma_p(A_1,...,A_n)$ is the union of finitely many complex hyperplanes in $\C^n$.
\item[(c)] The complex polynomial
$$p(z_1,\cdots,z_N)=\det(z_1A_1+\cdots+z_nA_n-I)$$
is completely reducible.
\end{enumerate}
	\end{theorem}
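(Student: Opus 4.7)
The plan is to run the cycle (a) $\Rightarrow$ (c) $\Rightarrow$ (b) $\Rightarrow$ (c) $\Rightarrow$ (a), with only the last implication requiring real work. For (a) $\Rightarrow$ (c), a pairwise commuting family of normal matrices is simultaneously unitarily diagonalized by some $U$, and substituting $U^*A_jU = \mathrm{diag}(\mu_{k,j})_{k=1}^N$ into the determinantal polynomial yields
\[
p(z_1,\dots,z_n) = \prod_{k=1}^N\Bigl(\sum_{j=1}^n z_j\mu_{k,j} - 1\Bigr),
\]
a product of $N$ linear polynomials. The implication (c) $\Rightarrow$ (b) is then immediate, since each linear factor of $p$ is the defining equation of an affine hyperplane in $\C^n$ and the zero set of $p$ is their union. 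For (b) $\Rightarrow$ (c), note that $p(0) = (-1)^N \neq 0$, so $p$ is a nonzero polynomial; its zero set being assumed to be a finite union $\bigcup_k\{\ell_k(z)=0\}$ of affine hyperplanes, Hilbert's Nullstellensatz combined with unique factorization in $\C[z_1,\ldots,z_n]$ forces $p = c\prod_k\ell_k^{m_k}$ for some multiplicities $m_k \ge 1$, so $p$ is completely reducible.

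The substantive step is (c) $\Rightarrow$ (a). Homogenizing the factorization gives
\[
\det\bigl(z_1A_1 + \cdots + z_nA_n - \lambda I\bigr) = \prod_{k=1}^N\bigl(L_k(z) - c_k\lambda\bigr)
\]
with $L_k$ linear and $c_k\neq 0$; hence the pencil $B(z) := z_1A_1 + \cdots + z_nA_n$ has eigenvalues $\lambda_k(z) := L_k(z)/c_k$ that are linear forms in $z$, i.e.\ the tuple has the classical Property L of Motzkin--Taussky. It suffices to prove commutativity one pair at a time, so fix $A, B$ among the $A_j$ and write $\lambda_k = \lambda_k(A)$, $\mu_k = \lambda_k(B)$ for the matched eigenvalues that make $\lambda_k + t\mu_k$ the eigenvalues of $A + tB$ for every $t\in\C$. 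The Schur defect
\[
f(t) := \mathrm{tr}\bigl((A+tB)(A+tB)^*\bigr) - \sum_{k=1}^N |\lambda_k + t\mu_k|^2
\]
is nonnegative by Schur's inequality and vanishes precisely when $A+tB$ is normal. Expanding $f$ as a polynomial in $t, \bar t$ and using the normality of $A$ and of $B$ separately to cancel the constant and the $|t|^2$ contributions yields
\[
f(t) = 2\,\mathrm{Re}(\bar\alpha\, t), \qquad \alpha := \mathrm{tr}(AB^*) - \sum_{k=1}^N \lambda_k\overline{\mu_k};
\]
nonnegativity for every $t \in \C$ forces $\alpha = 0$, hence $f \equiv 0$, so $A + tB$ is normal for every $t$. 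Comparing coefficients of $t$ in the identity $(A+tB)(A+tB)^* = (A+tB)^*(A+tB)$ gives $A^*B = BA^*$, and Fuglede's theorem then produces $AB = BA$.

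The delicate point is that a sum of normal matrices need not be normal, so the conclusion ``$A+tB$ is normal for every $t\in\C$'' is much stronger than it appears and must be \emph{earned} from Property L through the Schur-defect calculation rather than taken for granted; this coincidence between $\sum_k|\lambda_k + t\mu_k|^2$ and $\mathrm{tr}((A+tB)(A+tB)^*)$ is what the rigid eigenvalue formula (c) forces. Once this normality is in hand, Fuglede's theorem delivers commutativity of the single pair, and iterating over all pairs $(A_j, A_k)$ completes the cycle and yields (a).
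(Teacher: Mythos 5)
The paper does not actually prove this statement: Theorem~\ref{csz2} is quoted verbatim as Theorem~B of the reference \cite{CSZ} and used as a background tool, so there is no in-paper proof to compare against. Evaluated on its own terms, your argument is correct. The cycle you set up is sound: simultaneous unitary diagonalization gives (a)$\Rightarrow$(c); (c)$\Rightarrow$(b) is immediate; and for (b)$\Rightarrow$(c), since $p(0)=(-1)^N\neq 0$ the zero locus is a nonempty finite union of affine hyperplanes $\{\ell_k=0\}$ with each $\ell_k$ prime, every irreducible factor $q$ of $p$ has $Z(q)\subset Z(\ell_k)$ for some $k$ by irreducibility, and equidimensionality plus the Nullstellensatz force $q\sim\ell_k$, so $p=c\prod\ell_k^{m_k}$.

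The substantive step (c)$\Rightarrow$(a) is exactly the classical fact (going back to Motzkin--Taussky and Wiegmann) that a tuple of normal matrices with Property~L must commute, and your Schur-defect derivation is a clean self-contained proof of it. The homogenization is legitimate: since $\det(z\cdot A-\lambda I)$ is homogeneous of degree $N$ in $(z,\lambda)$ and its coefficient of $\lambda^N$ is $(-1)^N$, it factors as $\prod_{k=1}^N(L_k(z)-c_k\lambda)$ with all $c_k\neq 0$ (including the dehomogenizing factors $\lambda$ with $L_k\equiv 0$ when $\deg p<N$); restricting two coordinates gives the matched linear eigenvalue families $a_k+tb_k$ with $a_k$, $b_k$ the eigenvalues of $A$, $B$. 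Normality of $A$ and $B$ individually kills the constant and $|t|^2$ coefficients of $f(t)=\mathrm{tr}\bigl((A+tB)(A+tB)^*\bigr)-\sum_k|a_k+tb_k|^2$, and since $\mathrm{tr}(BA^*)=\overline{\mathrm{tr}(AB^*)}$ and $\sum b_k\bar a_k=\overline{\sum a_k\bar b_k}$ one indeed gets $f(t)=2\,\mathrm{Re}(\bar\alpha t)$; nonnegativity then forces $\alpha=0$, so $A+tB$ is normal for all $t\in\C$. Comparing the $t$ and $\bar t$ coefficients in $(A+tB)(A+tB)^*=(A+tB)^*(A+tB)$ gives $BA^*=A^*B$, and Fuglede applied to the normal operator $A^*$ upgrades this to $AB=BA$. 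Iterating over pairs completes (a). This is a valid and reasonably standard route to the theorem that the paper simply imports from \cite{CSZ}.
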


\vspace{.2cm}

\subsection{\textbf{Spectral compressions}}

In the proof of Theorems \ref{rigidity_for_sl2_1} and \ref{rigidity_for_S_nu1} we will need the following result which is an adjusted to our needs simplified version of the first statement in Theorem  1.15 in \cite{S}.

\vspace{.2cm}

Let $A_1$ be a normal matrix with eigenvalues $\lambda_1,\dots,\lambda_m$ and spectral resolution
$$A_1=\displaystyle \sum_{j=1}^m \lambda_j P_{\lambda_j}.$$
Spectral projections $P_j$ are given by
$$P_{\lambda_j}=\frac{1}{2\pi i}\int_\gamma (w-A_1)^{-1}dw, $$
where $\gamma$ is a conotur that separates $\lambda_j$ from the rest of the spectrum of $A_1$.

\begin{theorem}\label{compressions}
Let $A_1,A_2$ be $n\times n$ matrices with	$A_1$ being normal. Suppose that the line
$$\lambda x_1+\mu x_2=1 $$
is in $\sigma_p(A_1,A_2)$.  Then
\begin{itemize}
\item $\lambda$ is an eigenvalue of $A_1$.	
\item If
the multiplicity of this eigenvalue (and, therefore, of this line too) is equal to 1, then
\begin{equation}\label{compress}
P_\lambda A_2P_\lambda=\mu P_\lambda.	
\end{equation}
\end{itemize}
\end{theorem}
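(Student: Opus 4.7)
My plan is to dispatch the two claims in turn, using the factor theorem for the polynomial $P(x_1,x_2):=\det(x_1A_1+x_2A_2-I)$ together with the unitary diagonalization of the normal matrix $A_1$.

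For the first claim, I would split on whether $\lambda=0$. If $\lambda\neq 0$, the point $(1/\lambda,0)$ lies on the given line and hence in $\sigma_p(A_1,A_2)$; consequently $(1/\lambda)A_1-I$ is singular and $\lambda\in\sigma(A_1)$. If $\lambda=0$, then $\mu\neq 0$ (for the line to be nonempty), the line is the horizontal $\{x_2=1/\mu\}$, and $x_1\mapsto\det(x_1A_1+(1/\mu)A_2-I)$ vanishes identically in $x_1$; its leading coefficient is $\det A_1$, so $0\in\sigma(A_1)$.

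For the second claim I would choose an orthonormal eigenbasis $e_1,\dots,e_n$ of the normal matrix $A_1$ with $A_1e_i=\lambda_ie_i$, $\lambda_1=\lambda$, and, by multiplicity one, $\lambda_i\neq\lambda$ for $i\geq 2$. Then $P_\lambda=e_1e_1^\ast$ and $P_\lambda A_2P_\lambda=a_{11}P_\lambda$ where $a_{ij}:=\langle A_2e_j,e_i\rangle$, so the statement reduces to the scalar identity $a_{11}=\mu$. Since the line is in $\sigma_p(A_1,A_2)$, the linear form $L(x_1,x_2):=\lambda x_1+\mu x_2-1$ divides $P$ in $\C[x_1,x_2]$; write $P=L\cdot Q$.

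When $\lambda\neq 0$, I would compute $(\partial P/\partial x_2)(1/\lambda,0)$ in two ways. From $P=LQ$ together with $L(1/\lambda,0)=0$ it equals $\mu\,Q(1/\lambda,0)$; and comparing $P(x_1,0)=\prod_i(\lambda_ix_1-1)=L(x_1,0)Q(x_1,0)$ gives $Q(1/\lambda,0)=\prod_{i\geq 2}(\lambda_i/\lambda-1)$, nonzero because $\lambda$ is simple. By Jacobi's identity the same derivative equals $\mathrm{tr}\bigl(\mathrm{adj}((1/\lambda)A_1-I)\cdot A_2\bigr)$, and in the chosen eigenbasis the adjugate of the diagonal matrix $\mathrm{diag}(0,\lambda_2/\lambda-1,\dots,\lambda_n/\lambda-1)$ is $\prod_{i\geq 2}(\lambda_i/\lambda-1)\cdot P_\lambda$, so the trace equals $\prod_{i\geq 2}(\lambda_i/\lambda-1)\cdot a_{11}$. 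Equating the two expressions and cancelling the nonzero prefactor yields $a_{11}=\mu$. When $\lambda=0$, the analogous move is to compute the coefficient of $x_1^{n-1}$ in $\det(x_1A_1+(1/\mu)A_2-I)$ via the principal-minor expansion $\det(D+B)=\sum_{S}\bigl(\prod_{i\notin S}d_i\bigr)\det B_S$ applied to $D=A_1$ and $B=(1/\mu)A_2-I$; only the subset $S=\{1\}$ contributes, giving $\prod_{i\geq 2}\lambda_i\cdot\bigl((1/\mu)a_{11}-1\bigr)$, whose vanishing again forces $a_{11}=\mu$.

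The argument is largely bookkeeping; the only essential point—and the place where I expect one must be careful—is that multiplicity one makes the prefactor $\prod_{i\geq 2}(\lambda_i/\lambda-1)$ (respectively $\prod_{i\geq 2}\lambda_i$) nonzero, so that $\mathrm{adj}((1/\lambda)A_1-I)$ has rank exactly one with range $P_\lambda\C^n$ and the Jacobi or leading-coefficient identity genuinely isolates $a_{11}$. Without simplicity this prefactor would vanish, the identity would collapse to $0=0$, and no information about $\langle A_2e_1,e_1\rangle$ could be extracted.
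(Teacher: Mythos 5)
The paper does not prove this theorem; it is quoted as an ``adjusted to our needs simplified version of the first statement in Theorem 1.15 in \cite{S}'', so there is no in-paper proof to compare against. Your argument is a correct, self-contained proof. For the first claim, evaluating at $(1/\lambda,0)$ when $\lambda\neq 0$, and matching the leading coefficient in $x_1$ of $\det(x_1A_1+(1/\mu)A_2-I)$ when $\lambda=0$, is exactly right. For the second claim, the combination of the factorization $P=L\cdot Q$, Jacobi's formula $\partial_{x_2}\det M=\mathrm{tr}(\mathrm{adj}(M)\,\partial_{x_2}M)$, and the rank-one identity $\mathrm{adj}\bigl((1/\lambda)A_1-I\bigr)=\bigl(\prod_{i\geq 2}(\lambda_i/\lambda-1)\bigr)P_\lambda$ in the eigenbasis correctly isolates $a_{11}=\langle A_2e_1,e_1\rangle$; the cancellation is legitimate precisely because simplicity makes the prefactor nonzero. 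The $\lambda=0$ branch via the principal-minor expansion $\det(D+B)=\sum_S\bigl(\prod_{i\notin S}d_i\bigr)\det B_S$ is also sound: with $d_1=0$, only $S=\{1\}$ survives at order $x_1^{n-1}$, and simplicity keeps $\prod_{i\geq 2}\lambda_i\neq 0$. Your closing remark correctly pinpoints that without simplicity both identities degenerate to $0=0$.
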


\section{\textbf{Proof of Theorem \ref{rigidity_for_S_nu1}}}\label{proof_for_S_nu}

\vspace{.2cm}

Fix an orthonormal eigenbasis $\zeta=(\zeta_0,...,\zeta_{n-1})$ for $A_1$, so that $A_1$ is diagonal in this basis and write $A_2$ and $A_3$ in this basis. 

\vspace{.3cm}


First we observe that 
condition (b) of Theorem \ref{rigidity_for_S_nu1} implies that
\begin{eqnarray}
&\sigma(A_1)=\sigma(H_{n,\nu})=\Big\{ \frac{\nu^2}{1-\nu^2}\big( \nu^{2(n-2k-1)}-1\big): \ k=0,...,n-1\Big\}, \label{sigma(A1)}\\
&\sigma(A_2A_2^\ast)=\sigma(A_2^\ast A_2)=\sigma(E_{n,\nu}E_{n,\nu}^\ast) \nonumber \\
&=\bigg\{\nu^2c_k(\nu)^2: \ k=0,\dots,n-1\bigg\}. \label{sigma(A2A2*)} 
\end{eqnarray}

In particular,  \eqref{sigma(A1)} and condition (a) of Theorem \ref{rigidity_for_S_nu1} imply that $A_1$ is self-adjoint and has a simple spectrum. 

Further,
it also follows from condition (b) of Thgeorem \ref{rigidity_for_S_nu1} that 
\begin{eqnarray}
&\sigma_p(A_1, A_2A_2^\ast)=\sigma_p(H_{n,\nu},E_{n,\nu}E_{n,\nu}^\ast)\nonumber \\
&=\displaystyle\prod_{j=0}^{n-1}\bigg (\frac{\nu^2}{1-\nu^2}\big( \nu^{2(n-2j-1)}-1\big)x_1-\nu^2c_{j+1}(\nu)^2x_2-1 \bigg), \label{ee*}\\
& \sigma_p(A_1, A_2^\ast A_2)=\sigma_p(H_{n,\nu}, E_{n,\nu}^\ast E_{n, \nu}) \nonumber \\
&=\displaystyle \prod_{j=0}^{n-1} \bigg (\frac{\nu^2}{1-\nu^2}\big( \nu^{2(n-2j-1)}-1\big)x_1-\nu^2c_j(\nu)^2x_2 -1\bigg), \label{e*e}
\end{eqnarray}
where in \eqref{ee*} - \eqref{e*e}  $c_0(\nu)=c_n(\nu)=0$, which agrees with \eqref{c}.

Thus, projective joint spectra \eqref{ee*} and \eqref{e*e} consist of projective lines. Since all three matrices $A_1$,  $A_2A_2^\ast$, and $A_2^\ast A_2$ are self-adjoint, Theorems \ref{csz1} and \ref{csz2} imply that $A_1$ commutes with both $A_2A_2^\ast$ and $A_2^\ast A_2$. Since $A_1$ has a simple spectrum, we conclude that  $\zeta$ is an eigenbasis for $A_2A_2^\ast$ and for $A_2^\ast A_2$, and, therefore, both $A_2A_2^\ast$ and $A_2^\ast A_2$ are diagonal matrices with
\begin{equation}\label{eigenvalues}
A_2A_2^\ast \zeta_j=\nu^2c_{j+1}(\nu)^2\zeta_j, \ A_2^\ast A_2\zeta_j=\nu^2c_j(\nu)^2\zeta_j, \ j=0,...,n-1.	
\end{equation}

Considering that for each $j$, the $j$-th diagonal entry of $A_2A_2^\ast$ is equal to the sum of squares of moduli of the entrees of the $j$-th row of $A_2$ and that the $j$-th diagonal entry of $A_2^\ast A_2$ is the sum of squares of moduli of the entries of  the $j$-th column of $A_2$, we conclude that the first column and the last row of $A_2$ are 0.

Write 
$$A_2=\left [ \begin{array}{cccc} 0 & b_{11} & \cdots & b_{1 \ n-1}\\
0 & b_{21}& \cdots & b_{2 \ n-1} \\ \cdot &\cdot & \cdot & \cdot \\0 & b_{n-1 \ 1} & \cdots & b_{n-1 \ n-1} \\ 0 & 0 & \cdots & 0\end{array} \right ]. $$
Since $A_2A_2^\ast$ is diagonal, the rows of the matrix $B=\big[ b_{ij}\big]_{i,j=1}^{n-1}$
and the columns of $B^\ast$ are orthogonal.
Also, \eqref{eigenvalues} shows that for every $1\leq j\leq n-1$
$$\displaystyle \sum_{m=1}^{n-1}|b_{j m}|^2=\nu^2c_{j+1}(\nu)^2, $$
and, therefore, 
$$B=DC, $$
where $D$ is the diagonal $(n-1)\times (n-1)$ matrix with $\nu c_{j+1}(\nu), \ j=0,...,n-2$ as the $j$-th entry on the main diagonal and $C=[c_{ij}]_{i,j=1}^{n-1}$ is a unitary $(n-1)\times (n-1)$ matrix. 

Of course,  this yields the following representation of $A_2$:
$$A_2=\tilde{D}\tilde{C}, $$
where $\tilde{D}$ is the diagonal $n\times n$ matrix obtained from $D$ by adding the $n$-th 0 column and the $n$-th 0 row, and $\tilde{C}$ is the $n\times n$ matrix obtained from $C$ by also adding the first 0 column and the $n$-th 0 row.

Since $\tilde{D}\tilde{D}^\ast=\tilde{D}^\ast \tilde{D}=A_2A_2^\ast$, we obtain
$$A_2^\ast A_2=\tilde{C}^\ast A_2A_2^\ast \tilde{C}.$$ 
Let us write 
$$U=\tilde{C}+W, $$
where the $n\times n$ matrix $W$ is ,
$$W=\left [ \begin{array}{cccc} 0 & 0 &\cdots & 0\\ \cdot & \cdot & \cdot & \cdot\\ 1 &  0 &\cdots & 0\end{array} \right ]. $$ 
Then $U$ is a unitary $n\times n$ matrix, and
\begin{eqnarray} 
&A_2=\tilde{D}(U-W),\label{A_2} \\
&A_2^\ast A_2=(U^\ast - W^\ast)A_2A_2^\ast (U-W). \label{A2*A2}
\end{eqnarray}
Since $\tilde{D}W=W^\ast A_2A_2^\ast =A_2A_2^\ast W=0$,  \eqref{A_2} and \eqref{A2*A2} yield

\begin{eqnarray} 
&A_2=\tilde{D}U  \label{A2 through U},\\
&A_2^\ast A_2=U^\ast A_2A_2^\ast U. \label{A2*A2 again}
\end{eqnarray}

By \eqref{eigenvalues} the .entrees of $A_2^\ast A_2$ on the main diagonal are obtained from the main diagonal entrees of $A_2A_2^\ast$ by the circular permutation
$${\mathscr P} = \left ( \begin{array}{cccc} 1 & 2 & \cdots & n \\ n & 1 & \cdots  & n-1   \end{array} \right ). $$
We denote the corresponding $n\times n$ matrix by the same letter ${\mathscr P}$, so that
$$ {\mathscr P}=\left [ \begin{array}{ccccc} 0& 1 &0 & \cdots & 0 \\0& 0&1 &\cdots &0 \\ \cdot & \cdot & \cdot & \cdot & \cdot \\ 0 & 0 & 0 &\cdots & 1 \\ 1 & 0 & 0 &\cdots &0\end{array} \right ] $$
and 
\begin{equation}\label{permutation}
A_2^\ast A_2= {\mathscr P}^\ast A_2A_2^\ast {\mathscr P}. 
\end{equation}

 Relations \eqref{A2*A2 again} and \eqref{permutation} and the fact that $U$ and ${\mathscr P}$ are unitary imply
\begin{equation}\label{commuting} 
U^\ast A_2A_2^\ast U={\mathscr P}^\ast A_2A_2^\ast {\mathscr P} \Longrightarrow   A_2A_2^\ast U {\mathscr P}^\ast=U {\mathscr P}^\ast A_2A_2^\ast .
\end{equation}

Next, we will show that the following Lemma holds.

\begin{lemma}\label{spectra}
For every $\nu\in [-1,1]\setminus \{0\}$ except for a finite number of points in this  punctured interval, $A_2A_2^\ast$ and $A_2^\ast A_2$ have simple spectra, and for each of these exceptional values of $\nu$ the multiplicity of every spectral point of these matrices does not exceed 2.
\end{lemma}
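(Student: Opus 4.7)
The plan is to reduce the multiplicity statement to an elementary fact about values of a quadratic polynomial on a geometric progression.

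First I would simplify the eigenvalues from \eqref{eigenvalues}. Setting $u:=\nu^{-2}$, a direct algebraic manipulation of \eqref{c} gives
\[
\nu^2 c_k(\nu)^2 \;=\; \frac{\phi_k(u)}{(u-1)^2},\qquad \phi_k(u):=\frac{(u^k-1)(u^{n-k}-1)}{u^{n-k}},
\]
for $1\le k\le n-1$, while $k=0$ and $k=n$ each contribute a single zero eigenvalue (from $A_2^\ast A_2$ and $A_2A_2^\ast$ respectively). Since the prefactor $(u-1)^{-2}$ is positive and independent of $k$, and $\phi_k(u)>0$ for $u>1$, the multiplicities of the nonzero eigenvalues of $A_2A_2^\ast$ and of $A_2^\ast A_2$ are precisely the coincidences among $\phi_1(u),\dots,\phi_{n-1}(u)$.

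The central observation is that, writing $a=u^k$ and $N=u^n$, one may rewrite
\[
\phi_k(u)=\frac{(a-1)(N-a)}{N},
\]
which, for fixed $u>1$, is a downward-opening parabola in $a$ symmetric about $a=(N+1)/2$. Hence, for distinct $j,k\in\{1,\dots,n-1\}$, $\phi_j(u)=\phi_k(u)$ if and only if $u^j$ and $u^k$ are symmetric about the vertex, i.e.\ $u^j+u^k=1+u^n$. The multiplicity-at-most-two bound then follows immediately: three distinct indices $j_1,j_2,j_3$ giving the same eigenvalue would force $u^{j_1}+u^{j_2}=u^{j_1}+u^{j_3}=1+u^n$, hence $u^{j_2}=u^{j_3}$, which is impossible since $u>1$ and $x\mapsto u^x$ is strictly monotone.

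For the finiteness of the exceptional set I would note that for each of the $\binom{n-1}{2}$ pairs $\{j,k\}$ the equation $u^j+u^k=1+u^n$ is a nontrivial polynomial in $u$ of degree $n$, with only finitely many real roots; their union still gives finitely many exceptional values of $\nu\in(-1,1)\setminus\{0\}$. The endpoints $\nu=\pm 1$ (i.e.\ $u=1$) are handled by passing to the limit $c_k(\nu)^2\to k(n-k)$: since $k(n-k)=j(n-j)$ iff $j=k$ or $j=n-k$, the multiplicity there is again at most two. The only genuine labor is the initial rewriting; once $\phi_k$ is recognized as a parabola evaluated at $u^k$, the rest of the argument is essentially a one-line observation, which is why I expect no real obstacle beyond that opening calculation.
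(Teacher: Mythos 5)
Your argument is correct and genuinely different in spirit from the paper's, though the two approaches meet at the same polynomial equation. The paper works in the variable $z=\nu^2$, expands $c_i(\nu)^2=c_j(\nu)^2$ directly to arrive at $1+z^n-z^{n-j}-z^{n-i}=0$, factors out $(1-z)$, and then applies Descartes' rule of signs to conclude that for each pair $(i,j)$ with $i+j>n$ there is exactly one root $z_{ij}\in(0,1)$; the multiplicity-at-most-two statement is obtained by subtracting two instances of the resulting identity and observing a sign contradiction. Your substitution $u=\nu^{-2}$ clears denominators in the opposite direction, and the observation that $\phi_k(u)=\dfrac{(a-1)(N-a)}{N}$ with $a=u^k$, $N=u^n$ is a downward parabola in $a$ packages both conclusions at once: coincidence of eigenvalues is equivalent to $u^j+u^k=1+u^n$ (symmetry about the vertex), and three equal eigenvalues are impossible because the parabola is two-to-one off the vertex. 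I checked the opening rewrite and it is correct — expanding the product in \eqref{c} gives $\nu^{-2k}-\nu^{2n-4k}-1+\nu^{2n-2k}$, which becomes $(u^k-1)(u^{n-k}-1)/u^{n-k}$ after multiplying through by $u^{n-k}$, and the prefactor $\nu^4/(1-\nu^2)^2$ indeed collapses to $(u-1)^{-2}$. The trade-off is that your finiteness step only observes that each $u^n-u^j-u^k+1$ is a nonzero degree-$n$ polynomial and hence has finitely many roots, whereas the paper's Descartes argument gives the sharper statement (exactly one root per admissible pair) which it then exploits in Corollary \ref{different i and j} and in the case analysis of the main proof; your version proves the lemma as stated but would need to be supplemented, or the parabola picture pushed a bit further, to recover that extra structure. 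Your handling of the zero eigenvalues and of the endpoint case $\nu=\pm1$ (via $c_k(\pm1)^2=k(n-k)$ and the identity $j(n-j)=k(n-k)\Leftrightarrow j=k$ or $j=n-k$) matches the paper.
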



\noindent \begin{proof} 1. First, let $|\nu |<1$. By \eqref{sigma(A2A2*)} $A_2A_2^\ast$ and $A_2^\ast A_2$ have eigenvalues with multiplicities, if and only if for some $0\leq i\neq j\leq n-1$ we have $c_i(\nu)^2=c_j(\nu)^2$, which by \eqref{c} happens when
$$\big(\nu^{n-2i-1}-\nu^{n-1}\big)\big(\nu^{1-n}-\nu^{n-2i+1}\big)=\big(\nu^{n-2j-1}-\nu^{n-1}\big)\big(\nu^{1-n}-\nu^{n-2j+1}\big).  $$
This gives us
\begin{eqnarray*}
&\frac{1}{\nu ^{2i}}-1-\nu^{2(n-2i)}+\nu^{2(n-i)}=\frac{1}{\nu ^{2j}}-1-\nu^{2(n-2j)}+\nu^{2(n-j)}, \\
&\big(1-\nu^{2(n-i)}+\nu^{2n}\big)\nu^{2j}=\big(1-\nu^{2(n-j)}+\nu^{2n}\big)\nu^{2i}, \\
&\big( \nu^{2j}-\nu^{2i}\big)-\nu^{2n}\big(\frac{\nu^{2j}}{\nu^{2i}}-\frac{\nu^{2i}}{\nu^{2j}} \big) + \nu^{2n}\big( \nu^{2j}-\nu^{2i}\big)=0, \\
&\big(\nu^{2j}-\nu^{2i}\big)\big(1+\nu^{2n}-\nu^{2(n-j)}-\nu^{2(n-i)}\big)=0.
\end{eqnarray*}
Write $\nu^2=z$. Since $i\neq j$ the last equation implies
\begin{equation}\label{multiplicity}
1+z^n-z^{n-j}-z^{n-i}=0. 
\end{equation}
If $i+j\leq n, \ n-j\geq i$, and, therefore,
$$1+z^n-z^{n-j}-z^{n-i}\geq 1+z^n-z^i-z^{n-i}=\big(1-z^i\big)(1-z^{n-i}\big)>0. $$
Thus, if $c_i(\nu)^2=c_j(\nu)^2$ we mist have $i+j>n$. Further, suppose that $j>i$, so that $n-j<n-i$. Write \eqref{multiplicity} as
$$\big(1-z\big) \big(1+z+\cdots +z^{n-j-1}-z^{n-i}-\cdots -z^{n-1}\big)=0, $$
which implies
\begin{equation}\label{root}
1+z+\cdots +z^{n-j-1}-z^{n-i}-\cdots -z^{n-1}=0. 
\end{equation}
Since $i+j>n$, the number of negative coefficients of the polynomial in the left-hand side of \eqref{root} is bigger than the number of positive ones, so when $z$ runs from 0 to 1 this polynomial changes sign, and, therefore, has a root between 0 and 1. On the other hand, Descartes' rule of signs shows that this polynomial can have at most 1 positive root, and we conclude that for every pair $(i,j), \ i<j$ with $i+j>n$ there exists exactly 1 root \ $0<z_{ij}=\nu_{ij}^2<1$ such that $\big[c_i(\pm \nu_{ij})\big]^2=\big[c_j(\pm \nu_{ij})\big]^2$. Let us denote by $\tilde{S}$ the following finite subset of (-1,1):
\begin{equation}\label{tilde s}
\tilde{S}=\big\{ \pm \nu_{ij}=\pm\sqrt{z_{ij}}: \ i+j>n, \ i<j\big\}. 
\end{equation}
For every $\nu \in (-1,1)\setminus \big(\tilde{S}\cup 0\big)$ the matrices $A_2A_2^\ast$ and $A_2^\ast A_2$ have simple spectra. Also, suppose that $i<j<k$, $i+j>n, \ j+k>n$ and $z_{ij}=z_{ik}$, which means that $\nu_{ij}=\nu_{ik}$ and  $c_i(\nu_{ij})^2=c_j(\nu_{ij})^2 =c_k(\nu_{ij})^2$ is an eigenvalue of $A_2A_2^\ast$ of multiplicity greater than or equal to 3. 
Then it follows from \eqref{root} that
\begin{eqnarray*}
&1+z_{ij}+\cdots +z_{ij}^{n-k-1}-z_{ij}^{n-i}-\cdots -z_{ij}^{n-1}=0, \\
&1+z_{ij}+\cdots +z_{ij}^{n-k-1}+z_{ij}^{n-k}+\cdots +z_{ij}^{n-j-1} -z^{n-i}-\cdots -z^{n-1}=0,   	\end{eqnarray*}
a contradiction since $z_{ij}>0$. This shows that for $\nu \in \tilde{S}$ no eigenvalue of $A_2A_2^\ast$ has multiplicity higher than 2. 

\vspace{.1cm}

\noindent 2. Now,  let $|\nu|=1$. By \eqref{limit at nu=1} $c_i(\pm 1)^2=i(n-i)$, so it is easily seen that $c_i(\pm 1)^2=c_j(\pm 1)^2$ if and only if $i+j=n$.

This finishes the proof of Lemma \ref{spectra} \end{proof}

The following is a corollary to the proof of Lemma \ref{spectra}

\begin{corollary}\label{different i and j}
Suppose that $(i_1,j_1)$ and $(i_2,j_2)$ are such that $z_{i_1j_1}=z_{i_2j_2}$ and $i_1<i_2$. Then $j_1>j_2$ and $i_2-i_1>j_1-j_2$.	
\end{corollary}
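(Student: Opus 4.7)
The plan is to exploit equation~\eqref{multiplicity}, which asserts that the common value $z=z_{i_1j_1}=z_{i_2j_2}\in(0,1)$ simultaneously satisfies
$$1+z^n-z^{n-j_1}-z^{n-i_1}=0\quad\text{and}\quad 1+z^n-z^{n-j_2}-z^{n-i_2}=0.$$
Subtracting these gives the symmetric identity
$$z^{n-j_1}+z^{n-i_1}=z^{n-j_2}+z^{n-i_2},$$
and both claims of the corollary will follow from this identity by invoking the strict monotonicity of $t\mapsto z^t$ on the real line (which is strictly decreasing since $0<z<1$).

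For the first assertion $j_1>j_2$, I would argue directly from the identity: the hypothesis $i_1<i_2$ gives $z^{n-i_1}<z^{n-i_2}$, so balancing the identity forces $z^{n-j_1}>z^{n-j_2}$, and monotonicity yields $n-j_1<n-j_2$, i.e.\ $j_1>j_2$.

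For the inequality $i_2-i_1>j_1-j_2$, I would introduce the abbreviations $p_1=n-j_1$, $q_1=n-j_2$, $q_2=n-i_2$, $p_2=n-i_1$, and first verify the chain $p_1<q_1<q_2<p_2$. The outer inequalities $p_1<q_1$ and $q_2<p_2$ are exactly $j_1>j_2$ (just established) and $i_1<i_2$ (hypothesis), while the middle inequality $q_1<q_2$ is $i_2<j_2$, which holds because $z_{i_2j_2}$ is defined only for $i_2<j_2$. Setting $M=q_1-p_1=j_1-j_2$ and $L=p_2-q_2=i_2-i_1$, I would rewrite the subtracted identity as
$$z^{p_1}\bigl(1-z^M\bigr)=z^{q_2}\bigl(1-z^L\bigr),$$
and use $z^{p_1}>z^{q_2}>0$ (which follows from $p_1<q_2$) to conclude $1-z^L>1-z^M$, hence $z^L<z^M$, hence $L>M$.

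I do not expect a genuine obstacle here; the argument is essentially a rearrangement of an exponential identity combined with the monotonicity of $t\mapsto z^t$. The only point requiring minor care is justifying the full chain $p_1<q_1<q_2<p_2$ before performing the factorization, which is where both the just-proved conclusion $j_1>j_2$ and the standing convention $i<j$ from the proof of Lemma~\ref{spectra} must be invoked.
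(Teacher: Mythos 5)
Your argument is correct and rests on the same key facts the paper uses: the interleaving of exponents $n-j_1<n-j_2<n-i_2<n-i_1$ (the middle inequality coming from the standing convention $i<j$) and the strict monotonicity of $t\mapsto z^t$ for $0<z<1$. The only difference is presentational: you work with the unfactored equation \eqref{multiplicity} and extract the conclusion by factoring $z^{p_1}(1-z^M)=z^{q_2}(1-z^L)$, whereas the paper works with the divided-out form \eqref{root}, obtains $z^{n-j_1}+\cdots+z^{n-j_2-1}=z^{n-i_2}+\cdots+z^{n-i_1-1}$, and compares the two geometric sums term by term (each left term strictly exceeds each right term, so there must be fewer of them); since $\sum_{k=a}^{b}z^k=z^a\frac{1-z^{b-a+1}}{1-z}$, the two computations are algebraically identical.
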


\begin{proof} We have by \eqref{root}
\begin{eqnarray}
&1+z_{i_1j_1}+\cdots +z_{i_1j_1}^{n-j_1-1}-z_{i_1j_1}^{n-i_1}-\cdots -z_{i_1j_1}^{n-1}=0 \label{i_1j_1}\\
&1+z_{i_1j_1}+\cdots+z_{i_1j_1}^{n-j_2-1}-z_{i_1j_1}^{n-i_2}-\cdots -z_{i_1j_1}^{n-1}=0.\label{i_2j_2}
\end{eqnarray}
Since $i_2>i_1$, \ 
$$n-i_2<n-i_1 \ \Longrightarrow z_{i_1j_1}^{n-i_2}+\cdots +z_{i_1j_1}^{n-1}>z_{i_1j_1}^{n-i_1}+\cdots +z_{i_1j_1}^{n-1}\Longrightarrow j_2<j_1.$$ 
Also, \eqref{i_1j_1} and \eqref{i_2j_2} imply
$$z_{i_1j_1}^{n-j_1}+\cdots +z_{i_1j_1}^{n-j_2-1}=z_{i_1j_1}^{n-i_2}+\cdots +z_{i_1j_1}^{n-i_1-1}. $$
Since $0<z_{i_1j_1}<1$ and $n-j_2-1<n-i_2$ we see that 
$$(n-j_2-1)-(n-j_1-1)=j_1-j_2<(n-i_1-1)-(n-i_2-1)=i_2-i_1.$$
\end{proof}

Write $S=\tilde{S}\cup \big\{ \pm 1\big\}$, where $\tilde{S}$ is given by \eqref{tilde s}. Then for every $\nu \notin S$, \ $A_2A_2^\ast$ and $A_2^\ast A_2$ have simple spectra.

\vspace{.2cm}

\vspace{.2cm}

We now return tom the proof of Theorem \ref{rigidity_for_S_nu1}. There are  3 different cases.

\vspace{.1cm}

1). $\nu \notin S$.

Since in this case $A_2A_2^\ast$ is a diagonal matrix with distinct entries on the mail diagonal,
relation \eqref{commuting} shows that $U {\mathscr P}^\ast $ is a diagonal unitary matrix, so that
$$U{\mathscr P}^\ast=\Lambda=diag(e^{i\theta_1},\dots, e^{i\theta_n}). $$  
Hence,
\begin{equation}\label{U1} 
U=\Lambda {\mathscr P}.
\end{equation}
Now, \eqref{A2 through U} yields
\begin{equation} \label{A_2 again}
A_2 =\Lambda E_{n,\nu}.	
\end{equation} 

A similar argument that uses 
$$\sigma_p^d(A_1,A_3A_3^\ast)=\sigma_p^d(H_{n,\nu},F_{n,\nu}F_{n,\nu}^\ast);  \
\sigma_p^d(A_1,A_3^\ast A_3)=\sigma_p^d(H_{n,\nu},F_{n,\nu}^\ast F_{n,\nu}) $$ 
shows that for $\nu \notin S$ we have 
\begin{equation}\label{A_3 now}
A_3=F_{n,\nu}\Sigma^\ast,	
\end{equation}
where $\Sigma$ is a diagonal matrix with unimodular entries on the main diagonal. 

\noindent Further, \eqref{A_2 again}, \eqref{A_3 now} and relation $\sigma_p^d(A_1,A_2A_3)=\sigma_p^d(H_{n,\nu},E_{n,\nu}F_{n,\nu})$ from item b) in Theorem \ref{rigidity_for_S_nu1} imply that
$$ \Lambda=\Sigma,$$
and, therefore,
$$A_2=\tilde{\Lambda}E_{n,\nu}\tilde{\Lambda}^\ast, \ A_3=\tilde{\Lambda}E_{n,\nu}\tilde{\Lambda}^\ast, $$
where $\tilde{\Lambda}=diag \big(1,e^{-i\theta_1},e^{-i(\theta_1+\theta_2)},...,e^{-i(\theta_1+..\theta_{n-1})}\big)$. 
Since $\tilde{\Lambda}$ is diagonal, it commutes with $A_1$, which shows that
$$(A_1,A_2,A_3)=\tilde{\Lambda}(H_{n,\nu},E_{n,.\nu},F_{n,\nu})\tilde{\Lambda}^\ast, $$
This finishes the proof in the  case $\nu \notin S$.

\vspace{.2cm}

\noindent 2). $\nu\in S, \ |\nu|<1$. 

Let ${\mathscr M}(\nu)$ be the set of all pairs  $(ij)$ such that $\nu^2=z_{ij}$.  

In this case, \eqref{commuting} does not imply that $U{\mathscr P}^\ast$ is necessary a diagonal matrix, but rather the following. 

Let us denote by $Q_{ij}, \ i<j$ the $n\times n$ matrix corresponding to the exchange of the $i$-th and the $j$-th rows by the left multiplication. Then either $U{\mathscr P}^\ast$ is diagonal, or $\exists {\mathscr K}\subset {\mathscr M}(\nu)$ such that

\begin{equation}\label{U}
U=\Lambda \displaystyle \bigg ( \prod_{(i,j)\in {\mathscr K}}	Q_{i-1 \ j-1}\bigg ){\mathscr P}.
\end{equation}

Observe that 
\begin{eqnarray*}
&\sigma_p(A_1, A_2)=\sigma_p(H_{n,\nu},E_{n,\nu})=det\big(x_1H_{n,\nu}+x_2E_{n,\nu}-I\big) 
\end{eqnarray*} 
is a polynomial independent of $x_2$. 

\vspace{.2cm}

\begin{Claim}\label{claim}\textit{If ${\mathscr K}$ is not empty, then  \eqref{U} implies that the polynomial \ $$det \big( x_1A_1+x_2A_2-I\big)$$    contains non-trivial monomials with positive powers of $x_2$.}	
\end{Claim}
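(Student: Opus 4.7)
The plan is to specialise at $x_1 = 0$ and show that the univariate polynomial
\[
\det(x_2 A_2 - I) \;=\; (-1)^n \prod_{k=1}^n (1 - x_2\rho_k),
\]
where $\rho_1,\dots,\rho_n$ are the eigenvalues of $A_2$, is already non-constant in $x_2$. Since the $x_1=0$ specialisation of $\det(x_1A_1 + x_2A_2 - I)$ is exactly $\det(x_2A_2-I)$, every non-trivial $x_2$-monomial of the specialisation lifts to a non-trivial $x_2$-monomial of the bivariate polynomial. The Claim therefore reduces to the assertion that, whenever $\mathscr{K}\neq\emptyset$, the matrix $A_2$ is not nilpotent.

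From \eqref{A2 through U} and \eqref{U} we have $A_2 = \tilde{D}\Lambda\bigl(\prod_{(i,j)\in\mathscr{K}}Q_{i-1,j-1}\bigr)\mathscr{P}$. Let $\pi$ denote the permutation of $\{0,1,\dots,n-1\}$ realised by the product of the disjoint transpositions $(i-1,j-1)$, $(i,j)\in\mathscr{K}$. Because each pair in $\mathscr{K}$ has its indices in $\{1,\dots,n-1\}$, the permutation $\pi$ fixes $n-1$. A direct calculation then yields $A_2\zeta_0=0$ and, for $k=1,\dots,n-1$,
\[
A_2\zeta_k \;=\; \alpha_k\,\zeta_{\pi(k-1)},\qquad \alpha_k \neq 0.
\]
Let $G$ be the directed graph on vertices $\{0,1,\dots,n-1\}$ with edge $k\to\pi(k-1)$ for each $k\geq 1$. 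If $C$ is any directed cycle of $G$, then $V_C:=\operatorname{span}\{\zeta_k:k\in C\}$ is $A_2$-invariant and $A_2\big|_{V_C}$ is a scaled cyclic permutation matrix of determinant $\pm\prod_{k\in C}\alpha_k\neq 0$; hence $A_2\big|_{V_C}$ is invertible, so $A_2$ has at least one non-zero eigenvalue and is not nilpotent.

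To exhibit such a cycle, pick $(i^*,j^*)\in\mathscr{K}$ innermost, meaning that no $(i,j)\in\mathscr{K}$ satisfies $i^*<i<j<j^*$. Lemma \ref{spectra} guarantees that every spectral value of $A_2A_2^{\ast}$ has multiplicity at most $2$, so distinct pairs in $\mathscr{K}$ have disjoint index sets. Corollary \ref{different i and j} then forces every other pair $(i,j)\in\mathscr{K}$ to satisfy $i<i^*$ and $j>j^*$, so the open interval $(i^*,j^*)$ contains no index belonging to any pair of $\mathscr{K}$. Consequently $\pi(k-1)=k-1$ for every $k\in\{i^*+1,\dots,j^*-1\}$, while the transposition $(i^*-1,j^*-1)$ gives $\pi(i^*-1)=j^*-1$. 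Together these supply the directed cycle
\[
i^* \;\to\; j^*-1 \;\to\; j^*-2 \;\to\; \cdots \;\to\; i^*+1 \;\to\; i^*
\]
in $G$ (collapsing to the self-loop $i^*\to i^*$ in the adjacent case $j^*=i^*+1$).

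The principal delicacy is precisely this final cycle-finding step. It rests entirely on the nestedness structure of $\mathscr{K}$ supplied by Corollary \ref{different i and j}, which rules out interference from other pairs along the descending path $j^*-1,j^*-2,\dots,i^*+1$; without that structure a different pair nested strictly inside $(i^*,j^*)$ could in principle redirect the path away from $i^*$ and destroy the cycle.
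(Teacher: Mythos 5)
Your proof is correct, and it takes a genuinely different route from the paper's. You specialise at $x_1=0$ and reduce the claim to the statement that $A_2$ is not nilpotent; you then locate a single directed cycle associated with the \emph{innermost} pair $(i^*,j^*)\in\mathscr{K}$, show that $A_2$ has a non-zero eigenvalue, and lift the resulting non-constant factor of $\det(x_2A_2-I)$ back to the bivariate determinant. The paper instead works directly with the Leibniz expansion of $\det(x_1A_1+x_2A_2-I)$, splits into the cases $j_k-i_k=1$ (a diagonal $x_2$-term) and $j_k-i_k>1$, identifies the full collection of simple cycles $\gamma_s,\delta_s$ associated with \emph{all} of $\mathscr{K}$, and isolates the unique substitution that uses every one of them, producing an explicit contribution of total $x_2$-degree $j_1-i_1$ multiplied by a product of linear factors in $x_1$. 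Both arguments hinge on the same combinatorial inputs — Lemma \ref{spectra} (multiplicity $\leq 2$, hence disjoint index sets in $\mathscr{K}$) and Corollary \ref{different i and j} (nestedness) — but your reduction to non-nilpotency is more elementary, requires tracking only one cycle rather than all of them, and absorbs the paper's two cases into the single dichotomy ``self-loop versus longer cycle''. What the paper's heavier computation buys, if one cares about it, is an explicit monomial and its coefficient; your argument gives existence only, which is all the Claim requires.

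One small cosmetic point: you assert that pairs in $\mathscr{K}$ have indices in $\{1,\dots,n-1\}$; the constraint $i+j>n$ with $j\le n-1$ actually forces $i\ge 2$, so the indices lie in $\{2,\dots,n-1\}$. This is harmless (you only need $n-1$ to be fixed by $\pi$, which follows from $j\le n-1$, and you need the cycle to avoid vertex $0$, which follows from $i^*\ge 2$), but worth stating accurately.
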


 Of course, this contradicts  $\sigma_p(A_1, A_2)=\sigma_p(H_{n,\nu},E_{n,\nu})$, so once the claim is proved, the present case is reduced to the previous one considered above.

\vspace{.2cm}

\noindent Suppose that ${\mathscr K}\neq \emptyset$. Write 
\begin{eqnarray}
&x_1A_1+x_2A_2-I=\big [ d_{ij}(x_1,x_2)\big ]_{i,j=0}^{n-1}, \label{determinant}\\ 
&det\big(x_1A_1+x_2A_2-I\big)=\displaystyle \sum_{{\mathcal P}} \displaystyle \prod_{l=0}^{n-1} (-1)^{sign(\mathcal P)}d_{l j_l}(x_1,x_2), \label{determinant_decomposition}
\end{eqnarray} 
where the sum is taken over all substitutions $\mathcal P=\left ( \begin{array}{ccc} 0 & \cdots & n-1\\ j_0 &\cdots & j_{n-1} \end{array} \right ) $. We also remark that each non-diagonal entry $d_{ij}(x_1,x_2), \ i\neq j$ is either 0, or a constant times $x_2$.

\vspace{.2cm}

Let ${\mathscr K}=\big\{(i_1,j_1),\dots, (i_k,j_k)\big\}, \ i_1<i_2<\cdots <i_k$. Corollary \ref{different i and j} implies $i_1<\cdots<i_k<j_k<\cdots<j_1$, and, as we saw above, $i_s+j_s>n$. 

First, let us consider the case when $j_k-i_k=1$. In this situation the entries on the main diagonal of $x_1A_1+x_2A_2-I$ are
\begin{eqnarray*}
&d_{ii}(x_1,x_2)=\frac{\nu^2}{1-\nu^2}\big(\nu^{2(n-2i-1)}-1\big)x_1-1, \  i\neq i_k, \\
&d_{i_ki_k}(x_1,x_2)=\frac{\nu^2}{1-\nu^2}\big(\nu^{2(n-2i-1)}-1\big)x_1+e^{i\theta_{i_k-1}}\nu c_{i_k}(\nu)x_2-1,
\end{eqnarray*}
and it follows that $det\big(x_1A_1+x_2A_2-I\big)$ contains the following terms with the first power of $x_2$
$$e^{i\theta_{i_k-1}}\nu c_{i_ki_k}(\nu)x_2\displaystyle \prod_{i\neq i_k} \Big[ \frac{\nu^2}{1-\nu^2}\big(\nu^{2(n-2i-1)}-1\big)x_1-1\Big].$$

\vspace{.2cm}
\noindent Now, suppose that $j_k-i_k>1$. In this case all entries on the main diagonal of $x_1A_1+x_2A_2-I$ are linear functions in $x_1$ and do not contain terms with $x_2$. Also, all non-trivial entries $d_{ij}(x_1,x_2)$ outside of the main diagonal are:
\begin{eqnarray}
&d_{ii+1}(x_1,x_2)=	e^{i\theta_i}\nu c_i(\nu)x_2, \ i=0,...,n-2, \ i\neq i_s-1, j_s-1, \nonumber \\ &s=1,...,k \nonumber \\
&d_{i_s-1 \ j_s}(x_1,x_2)=e^{i\theta_{j_s-1}} \ \nu c_{j_s}x_2, \ s=1,...,k \label{d_ij}\\
&d_{j_s-1 \ i_s}(x_1,x_2)=e^{i\theta_{i_s-1}} \nu c_{i_s}x_2, \ s=1,...,k. \nonumber
\end{eqnarray}
Since we have only $n-1$ non-zero entries outside of the main diagonal, each non-zero product in the rigfht-hand side of \ref{determinant_decomposition} contains an entry from the main diagonal.

\noindent Let us call a \textit{cycle} a set of pairs $\gamma=\big\{(m_1,m_2),(m_2,m_3),...,(m_r,m_{r+1})\big\}$ if $m_l\neq m_{l+1}$ and $.m_{r+1}=m_1$. We say that $r$ is \textit{the length of the cycle} and denote it by $\ell(\gamma)$. We say that a cycle is \textit{simple}, if it does not contain  proper subcycles, and  we call a cycle $\gamma$ \textit{non-trivial}, if all entries $d_{m_i,m_{i+1}}(x_1,x_2)$ do not vanish.

Given a substitution $\mathcal P=\left ( \begin{array}{ccc} 0 & \cdots & n-1\\ j_0 &\cdots & j_{n-1} \end{array} \right ) $, the set of pairs \newline
$(0,j_0),(1,j_1),\dots, (n-1,j_{n-1})$ is comprised of fixing pairs $(m,m)$ and simple cycles. It is easily seen from \eqref{d_ij} that each product in the right-hand side of \eqref{determinant_decomposition} is a non-trivial polynomial in $x_1$ times $x_2$ raised to the power equal to the sum of the lengths of simple cycles in $\mathcal P$.




Every entry of the first column and the last row of the matrix $x_1A_1+x_2A_2-I$ outside of the main diagonal is 0. Every other column and row contains only 1 non-zero entry outside of the main diagonal. Thus, every pair $(ij), \ i\neq j$ corresponding to a non-zero entry of $x_1A_1+x_2A_2-I$ may belong to only one non-trivial simple cycle.

It is easily seen that all non-trivial simple cycles are: 

\begin{eqnarray}
&\gamma_s=\Big\{(i_{s}-1,j_s),(j_s,j_s+1),\dots,(j_{s-1}-2,j_{s-1}-1),(j_{s-1}-1, i_{s-1})\Big\}, \nonumber \\&(i_{s-1},i_{s-1}+1),\dots, (i_s-2,i_s-1). \nonumber \\
 & \label{cycles}	\\
&\delta_s=\Big\{(j_s-1,i_s),(i_s,i_s+1),\dots, (i_{s+1}-2,i_{s+1}-1),(i_{s+1}-1,j_{s+1})\Big\}, \nonumber \\
&(j_{s+1},j_{s+1}+1),\dots, (j_s-2,j_s-1), \nonumber
 \end{eqnarray}
The corresponding lengths are:
\begin{eqnarray*}
&\ell(\gamma_s)=j_{s-1}-j_s +i_s-i_{s-1}, \\
&\ell(\delta_s)=j_s-j_{s+1}+i_{s+1}-i_s, \\
&\displaystyle \sum_s \Big(\ell(\gamma_s)+\ell(\delta_s)\Big)=j_1-i_1.
\end{eqnarray*}
Index pairs $ (i_1-1,j_1) \ \mbox{and} \ (i,i+1), \ \mbox{for} \ 0\leq  i\leq i_1-2 \ \mbox{and} \ j_1\leq i\leq n-2$ do not belong to any non-trivial cycle.
Thus, there is only one substitution $\mathcal P$ that contains all non-trivial cycles $\gamma_s$ and $\delta_s$ listed in \eqref{cycles} and fixing pairs $(i_1-1,i_1-1)$ and $(i,i)$ with $0\leq i\leq i_1-2 \ \mbox{and} \ j_1\leq i\leq n-1$. This shows that the polynomial $det(x_1A_1+x_2A_2-I)$ contains all monomials from
\begin{eqnarray*}
&\pm x_2^{j_1-i_1}e^{i(\theta_0+\cdots +\theta_{n-1})} \bigg(\displaystyle \prod_{m=i_1}^{j_1}\nu c_{m}(\nu)\bigg)\displaystyle \prod_{m=0}^{i_1-1} \bigg(\frac{\nu^2}{1-\nu^2}\big(\nu^{2(n-2m-1)}-1\big)x_1-1\bigg ) \\
&\times  \displaystyle \prod_{m=j_1}^{n-1}\bigg(\frac{\nu^2}{1-\nu^2}\big(\nu^{2(n-2m-1)}-1\big)x_1-1 \bigg),
\end{eqnarray*}
which proves Claim \ref{claim}.

\vspace{.2cm}

\noindent 3). Finally, let $|\nu|=1$.

\vspace{.1cm}

In this case ${\mathscr M}(1)={\mathscr M}(-1)=\Big \{(k,n-k): \ k=1,...,n-1\Big\}.$  The proof in this case is very similar to the one in the case $\nu\in S, \ |\nu|<1$: we prove the similar claim that ${\mathscr K}\neq \emptyset$ implies the existence of non-zero monomials with positive degrees of $x_2$, and this prove goes along practically the same lines.

\vspace{.1cm}

Theorem \ref{rigidity_for_S_nu1} is completely proved.

\section{\textbf{Proof of Theorem \ref{rigidity_for_sl2_1}  }}\label{proofs_for_sl2}

\vspace{.2cm}


Since 
$$\sigma_p(A_1,A_2A_2^\ast)=\sigma_p(H_n,E_nE_n^\ast),  \  \sigma_p(A_1,A_2^\ast A_2)=\sigma_p(H_n,E_n^\ast E_n),$$
and since $H_n$ has a simple spectrum,  $\sigma_p(A_1,A_2A_2^\ast )$, and $\sigma_p(A_1,A_2^\ast A_2)$ consist of complex lines, and spectral points of $A_2A_2^\ast$ and $A_2^\ast A_2$ have multiplicity 2 or 1, a proof similar to the one in Theorem \ref{rigidity_for_S_nu1} shows that a relation similar to \eqref{A_2 again}  holds. Namely,
\small \begin{equation}\label{still A_2} A_2=\Lambda E_n=\left [ \begin{array}{ccccc} 0 & e^{i\theta_0}(n-1) & 0 &\cdots & 0 \\ 0 & 0 & 2e^{i\theta_1}(n-2) & \cdots & 0 \\ \cdot & \cdot & \cdot & \cdot & \cdot \\ 0 & 0 & 0 & \cdots & e^{i\theta_{n-2}}(n-1) \\ 0 & 0 & 0 & \cdots & 0    \end{array} \right ],\end{equation}\normalsize

where, again, $\Lambda =diag(e^{i\theta_0},\dots,e^{i\theta_[n-1})$ is a diagonal matrix. 

\vspace{.2cm}

In our case 
\begin{equation}\label{F_nF_n*}
F_nF_n^\ast = diag \big( 0,\underbrace{1,\dots,1}_{n-1}\big), 
\end{equation}

so that, as above, by Theorem \ref{csz1} $A_1$ and $A_3A_3^\ast$ commute, and, since the spectrum of $A_1$ is simple, $A_3A_3^\ast$ is a diagonal matrix, and
relation \eqref{F_nF_n*} implies that  rows of $A_3$ from 1 to $n-1$ form an orthonormal system. In particular, if $A_3=\Big[ a^3_{ij}\Big]_{i,j=0}^{n-1} $, the Hilbert-Schmidt norm of $A_3$ satisfies
\begin{equation}\label{hs norm}
\parallel A_3\parallel_{HS}^2=trace(A_3A_3^\ast )=\displaystyle \sum_{i,j=0}^{n-1}|a^3_{ij}|^2=n-1.  
\end{equation}

\vspace{.2cm}

Further,
\begin{eqnarray}&\sigma_p(A_1,A_2A_3)=\sigma_p(H_n, E_nF_n)=\bigg \{ (x_1,x_2)\in \C^2: \nonumber \\
& \label{spectrum_A_2A_3} \\
&\Big( -(n-1)x_1-1\Big)\displaystyle \prod_{j=0}^{n-2} \Big((n-1-2j)x_1+(j+1)(n-j-1)x_2-1\Big)\bigg\}. \nonumber
 \end{eqnarray}
We will now apply Theorem \ref{compressions}. Denote by $P_j, \ j=0,...,n-1$ the projection \eqref{compress} corresponding to the eigenvalue $n-1-2j$ of $A_1$. By \eqref{still A_2} the compression 
$$P_jA_2A_3P_j=e^{i\theta_j}(j+1)(n-j-1)a^3_{j+1 j}P_j, \ j=0,...,n-2. $$ 
Theorem \ref{compressions} implies
$$e^{i\theta_j}(j+1)(n-1-j)a^3_{ j+1 j}=(j+1)(n-1-j),$$
so that 
\begin{equation}\label{A_3} a^3_{j+1 j}=e^{-i\theta_j}. \end{equation}
Now, \eqref{hs norm} yields
$$A_3=F_n\Lambda^\ast, $$
which finishes the proof of Theorem \ref{rigidity_for_sl2_1}.

\end{document}